\newcommand{\noopsort}[1]{}
\DeclareMathOperator{\GL}{GL}
\DeclareMathOperator{\val}{val}
\DeclareMathOperator{\tr}{Tr}
\DeclareMathOperator{\com}{Com}
\DeclareMathOperator{\rec}{rec}
\DeclareMathOperator{\cond}{cond}
\DeclareMathOperator{\disc}{Disc}
\DeclareMathOperator{\row}{row}
\DeclareMathOperator{\col}{col}
\newcommand{\Z}{\mathbb Z}
\newcommand{\Zp}{\Z_p}
\newcommand{\Q}{\mathbb Q}
\newcommand{\Qp}{\Q_p}
\newcommand{\OK}{\mathcal{O}_K}
\newcommand{\llb}{[\mkern-2.5mu[}
\newcommand{\rrb}{]\mkern-2.5mu]}
\newcommand{\llp}{(\mkern-2.5mu(}
\newcommand{\rrp}{)\mkern-2.5mu)}
\newcommand{\calU}{\mathcal{U}}
\newcommand{\softO}{O\tilde{~}}
\newcommand{\inv}{\text{\rm inv}}
\newcommand{\app}{\text{\rm app}}
\def\todo#1{\ \!\!{\color{red} #1}}
\definecolor{purple}{rgb}{0.6,0,0.6}
\begin{document}

\newtheorem{theo}{Theorem}[section]
\newtheorem{lem}[theo]{Lemma}
\newtheorem{prop}[theo]{Proposition}
\newtheorem{cor}[theo]{Corollary}
\newtheorem{quest}[theo]{Question}
\newtheorem{conj}[theo]{Conjecture}
\theoremstyle{definition}
\newtheorem{rem}[theo]{Remark}
\newtheorem{ex}[theo]{Example}
\newtheorem{deftn}[theo]{Definition}

\title{Characteristic polynomials of p-adic matrices}

\numberofauthors{3}
\author{
\alignauthor Xavier Caruso\\
  \affaddr{Universit\'e Rennes 1}\\
  \affaddr{\textsf{xavier.caruso@normalesup.org}}
\alignauthor David Roe\\
  \affaddr{University of Pittsburg}\\
  \affaddr{\textsf{roed@pitt.edu}}
\alignauthor Tristan Vaccon\\
  \affaddr{Universit\'e de Limoges}\\
  \affaddr{\textsf{tristan.vaccon@unilim.fr}}
}

\maketitle

\begin{abstract}
We analyze the precision of the characteristic polynomial $\chi(A)$ of
an $n \times n$ $p$-adic matrix $A$ using differential precision methods
developed previously.  When $A$ is integral with precision $O(p^N)$,
we give a criterion (checkable in time $\softO(n^\omega)$) for
$\chi(A)$ to have precision exactly $O(p^N)$.  We also give a $\softO(n^3)$
algorithm for determining the optimal precision when the criterion is not
satisfied, and give examples when the precision is larger than $O(p^N)$.
\end{abstract}

\begin{CCSXML}
<ccs2012>
<concept>
<concept_id>10010147.10010148.10010149.10010150</concept_id>
<concept_desc>Computing methodologies~Algebraic algorithms</concept_desc>
<concept_significance>500</concept_significance>
</concept>
</ccs2012>
\end{CCSXML}

\vspace{-1mm}
\ccsdesc[500]{Computing methodologies~Algebraic algorithms}
\printccsdesc

\vspace{-1.5mm}
\keywords{Algorithms, $p$-adic precision, characteristic polynomial,
eigenvalue}

%
%

\section{Introduction}

The characteristic polynomial is a fundamental invariant of a matrix: its roots
give the eigenvalues, and the trace and determinant can be extracted from its coefficients.
In fact, the best known division-free algorithm for computing determinants
over arbitrary rings \cite{kaltofen:92a} does so using the characteristic polynomial.
Over $p$-adic fields, computing the characteristic polynomial is a key ingredient in
algorithms for counting points of varieties over finite fields (see
\cite{kedlaya:01a, harvey:07a, harvey:14a}.


When computing with $p$-adic matrices, the lack of infinite memory implies
that the entries may only be approximated at some finite precision $O(p^N)$.  As a
consequence, in designing algorithms for such matrices one must analyze
not only the running time of the algorithm but also the accuracy of the result.

Let $M \in M_n(\Qp)$ be known at precision $O(p^N)$.
The simplest approach for computing the characteristic polynomial of $M$
is to compute $\det(X-M)$ either using recursive row expansion or various division free
algorithms \citelist{\cite{seifullin:02a} \cite{kaltofen:92a}}.  There are two issues
with these methods.  First, they are slower than alternatives that allow division,
requiring $O(n!)$, $O(n^4)$ and $\softO(n^{2 + \omega/2})$ operations.  Second,
while the lack of division implies that the result is accurate modulo $p^N$ as
long as $M \in M_n(\Zp)$, they still do not yield the optimal precision.

A faster approach over a field is to compute the Frobenius normal form of $M$,
which is achievable in running time $\softO(n^\omega)$ \cite{storjohann:01a}.
However, the fact that it uses division frequently leads to catastrophic losses of precision.
In many examples, no precision remains at the end of the calculation.

Instead, we separate the computation of the precision of $\chi_M$ from the computation
of an approximation to $\chi_M$.  Given some precision on $M$, we use \cite{caruso-roe-vaccon:14a}*{Lem. 3.4}
to find the best possible precision for $\chi_M$.  The analysis of this precision is the subject
of much of this paper.  With this precision known, the actual calculation of $\chi_M$
may proceed by lifting $M$ to a temporarily higher precision and then using a sufficiently
stable algorithm (see Remark \ref{rem:lift_for_optimal}).

One benefit of this approach is that we may account for diffuse precision:
precision that is not localized on any single coefficient of $\chi_M$.  For example,
let $0 \le \alpha_1 \le \alpha_2 \le \dots \le \alpha_n$, consider a
diagonal matrix $D$ with diagonal entries $(p^{\alpha_1}, \dots, p^{\alpha_n})$,
let $P, Q \in \GL_n(\Zp)$ and set $M = PDQ$.  The valuation of the coefficient
of $X^{n-k}$ in $\chi_M$ will be $\sum_{i=1}^k \alpha_i$, and if $\alpha_{n-1} > 0$
and $M$ is known with precision $O(p^N)$ then the constant term of $\chi_M$
will be known with precision larger than $O(p^N)$ (see
\cite{caruso-roe-vaccon:15a}*{Prop. 3.2}).

As long as none of the eigenvalues of $M$ are congruent to $-1$ modulo $p$,
then none of the coefficients of the characteristic polynomial of $1+M$ will have
precision larger than $O(p^N)$.  But $\chi_{1+M}(X) = \chi_M(X-1)$, so the precision
content of these two polynomials should be equivalent. The solution is that the
extra precision in $\chi_{1+M}$ is diffuse and not visible on any individual coefficient.
We formalize this phenomenon using lattices; see Section \ref{sec:padicprec} for
further explanation, and \cite{caruso:17a}*{\S 3.2.2} for a specific example of the
relationship between $\chi_M$ and $\chi_{1+M}$.

\subsection*{Previous contributions.}

Since the description of Kedlaya's algorithm in
\cite{kedlaya:01a}, the computation of characteristic polynomials
over $p$-adic numbers
has become a crucial ingredient in many counting-points algorithms.
For example, \cite{harvey:07a, harvey:14a} use $p$-adic cohomology
and the characteristic polynomial of Frobenius to compute zeta functions
of hyperelliptic curves.

In most of these papers, the precision analysis usually
deals with great details on how to obtain the matrices (\textit{e.g.} of action of Frobenius)
that are involved in the point-counting schemes.
However, the computation of their characteristic polynomials is often
a little bit less thoroughly studied: some
refer to fast algorithms (using division), while others
apply division-free algorithms.

In \cite{caruso-roe-vaccon:15a}, the authors have begun the application
of the theory of differential precision of \cite{caruso-roe-vaccon:14a}
to the stable computation of characteristic polynomials.
They have obtained a way to express the optimal precision
on the characteristic polynomial, but have not given practical algorithms
to attain this optimal precision.

\subsection*{The contribution of this paper.}

Thanks to the application the framework of differential precision
of \cite{caruso-roe-vaccon:14a} in \cite{caruso-roe-vaccon:15a},
we know that the precision of the characteristic polynomial
$\chi_M$ of a matrix $M \in M_n(\mathbb{Q}_p)$
is determined by the comatrix $\com(X -M).$
In this article, we provide:
\begin{enumerate}
\renewcommand{\itemsep}{0pt}
\item Proposition \ref{prop:shortcom}: a factorization of $\com(X -M)$
as a product of two rank-$1$ matrices (when $M$ has a cyclic vector),
computable in $\softO(n^\omega)$ operations by Theorem \ref{thm:compute_shortcom}
\item Corollary \ref{cor:prec_gain}: a simple, $\softO(n^\omega)$ criterion
to decide whether $\chi_M$ is defined at precision
higher than the precision of $M$ (when $M \in M_n(\mathbb{Z}_p)$).
\item Theorem \ref{thm:alg3}: a $\softO(n^3)$ algorithm with operations in $\Zp$
to compute the optimal precision on each coefficient of $\chi_M$ (when $M$
is given with uniform precision on its entries).
\item Proposition \ref{prop:opteigenvalues}: a $\softO(n^\omega)$
algorithm to compute the optimal precision on each eigenvalue of $M$
\end{enumerate}

\subsection*{Organization of the article.}

In Section \ref{sec:theo_study}, we review the differential
theory of precision developed in \cite{caruso-roe-vaccon:14a}
and apply it to the specific case of the characteristic polynomial,
giving conditions under which the differential will be surjective
(and thus provide a good measure of precision).  We also give
a condition based on reduction modulo $p$ that determines
whether the characteristic polynomial will have a higher
precision that the input matrix, and show that the image
of the set of integral matrices has the structure of an $\OK[X]$-module
when $M$ is itself integral. Finally, we give a compact description
of $\com(X-M)$, the main ingredient in the differential.

In Section \ref{sec:diffHess}, we develop $\softO(n^3)$ algorithms to approximate
the Hessenberg form of $M$, and through it to find $\com(X-M)$ and thus
find the precision of the characteristic polynomial of $M$.  In Section \ref{sec:diffFrob},
we give a $\softO(n^\omega)$ algorithm to compute the compact description of $\com(X-M)$.

Finally, we propose in Section \ref{sec:optjagged}
algorithms to compute the optimal coefficient-wise precision
for the characteristic polynomial.  We also give the results
of some experiments demonstrating that these methods can lead
to dramatic gains in precision over standard interval arithmetic.
We close with results describing the precision associated to
eigenvalues of a matrix.

\subsection*{Notation} Throughout the paper, $K$ will refer to a complete,
discrete valuation field, $\val : K \twoheadrightarrow \Z \cup \{+\infty\}$ to its valuation,
$\OK$ its ring of integers and $\pi$ a uniformizer. 
We will write that $f(n)=\softO (g(n))$ if there exists some
$k \in \mathbb{N}$ such that $f(n)=O(g(n) \log (n)^k).$
We will write $M$ for an $n \times n$ matrix over $K$,
and $\chi$ the characteristic polynomial map, $\chi_M \in K[X]$ for
the characteristic polynomial of $M$ and $d\chi_M$ for the differential
of $\chi$ at $M$, as a linear map from $M_n(K)$ to the space
of polynomials of degree less than $n$.
We fix an $\omega \in \mathbb{R}$ such that the multiplication
of two matrices over a ring is in $O(n^\omega)$ operations in the
ring. Currently, the smallest known $\omega$ is less than
$2.3728639 $ thanks to \cite{legall:14a}. 
We will denote by $I_n$ the identity matrix of rank $n$ in $M_n(K).$
When there is no ambiguity, we will drop this $I_n$
for scalar matrices, \textit{e.g.} for $\lambda \in K$
and $M \in M_n (K),$ $\lambda-M$ denotes $\lambda I_n - M.$
Finally, we write $\sigma_1(M), \dots, \sigma_n(M)$ for the
elementary divisors of $M$, sorted in increasing order of valuation.

\section{Theoretical study}
\label{sec:theo_study}

\subsection{The theory of p-adic precision} \label{sec:padicprec}

We recall some of the definitions and results of \cite{caruso-roe-vaccon:14a}
as a foundation for our discussion of the precision for the characteristic polynomial
of a matrix.  We will be concerned with two $K$-manifolds in what follows:
the space $M_n(K)$ of $n \times n$ matrices with entries in $K$ and the space
$K_n[X]$ of monic degree $n$ polynomials over $K$.  Given a matrix $M \in M_n(K)$,
the most general kind of precision structure we may attach to $M$ is a
\emph{lattice} $H$ in the tangent space at $M$.  However, representing an
arbitrary lattice requires $n^2$ basis vectors, each with $n^2$ entries.  We therefore
frequently work with certain classes of lattices, either \emph{jagged} lattices
where we specify a precision for each matrix entry or \emph{flat} lattices where
every entry is known to a fixed precision $O(p^N)$.  Similarly, precision for
monic polynomials can be specified by giving a lattice in the tangent space
at $f(X) \in K_n[X]$, or restricted to jagged or flat precision in the interest
of simplicity.

Let $\chi : M_n(K) \to K_n[X]$ be the characteristic polynomial map.
Our analysis of the precision behavior of $\chi$ rests upon
the computation of its derivative $d\chi$, using \cite{caruso-roe-vaccon:14a}*{Lem. 3.4}.
For a matrix $M \in M_n(K)$, we identify the tangent space $V$
at $M$ with $M_n(K)$ itself, and the tangent space $W$ at $\chi_M$ with
the space $K_{<n}[X]$ of polynomials of degree less than $n$.
Let $\com(M)$ denote the comatrix of $M$ (when $M \in \GL_n(K)$,
we have $\com(M) = \det(M) M^{-1}$) and $d\chi_M$ the differential at $M$.  Recall
\citelist{\cite{caruso-roe-vaccon:14a}*{Appendix B} \cite{caruso-roe-vaccon:15a}*{\S 3.3}}
that $d\chi_M$ is given by
\begin{equation} \label{eq:dchi}
d\chi_M: dM \mapsto \tr(\com(X-M) \cdot dM).
\end{equation}

\begin{prop}
\label{prop:surjectivity}
For $M \in M_n(K)$, the following conditions are equivalent:
\begin{enumerate}[(i)]
\renewcommand{\itemsep}{0pt}
\item \label{pt:surj} the differential $d\chi_M$ is surjective
\item \label{pt:cyc} the matrix $M$ has a cyclic vector (\emph{i.e.} $M$ is similar
to a companion matrix)
\item \label{pt:1d} the eigenspaces of $M$ over the algebraic
closure $\bar{K}$ of $K$ all have dimension $1$
\item \label{pt:charmin} the characteristic polynomial of $M$ is equal to the minimal polynomial of $M$.
\end{enumerate}
\end{prop}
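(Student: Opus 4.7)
The equivalences (ii) $\Leftrightarrow$ (iii) $\Leftrightarrow$ (iv) are classical: via the rational canonical form, $M$ admits a cyclic vector precisely when its minimal polynomial $\mu_M$ equals $\chi_M$, and, passing to the Jordan form over $\bar K$, this last condition is equivalent to each eigenvalue contributing a single Jordan block, i.e.\ to every eigenspace being one-dimensional. I would dispatch these in a sentence or two and then focus on (i) $\Leftrightarrow$ (iv).

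From \eqref{eq:dchi}, writing the trace out gives $d\chi_M(dM) = \sum_{i,j} \com(X-M)_{ij}\,(dM)_{ji}$, so the image of $d\chi_M$ is exactly the $K$-linear span inside $K_{<n}[X]$ of the $n^2$ polynomial entries of $\com(X-M)$. Since $K_{<n}[X]$ has dimension $n$ over $K$, surjectivity is the statement that these entries span the whole space.

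For (iv) $\Rightarrow$ (i) I would reduce to the case where $M$ is the companion matrix of $\chi_M$: conjugating $M$ into $C = PMP^{-1}$ only transforms $\com(X-M)$ into $P\,\com(X-M)\,P^{-1}$, which has the same $K$-span of entries. To compute $\com(X-C)$ explicitly, I would use the factorisation $\chi_M(X) - \chi_M(Y) = (X-Y)\,q(X,Y)$ in $K[X,Y]$; specialising $Y \mapsto C$ and invoking Cayley--Hamilton gives $\chi_M(X)\, I_n = (X-C)\,q(X,C)$, and cancelling the generically invertible factor $X-C$ over $K(X)$ yields $\com(X-C) = q(X,C)$. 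Expanding $q(X,Y) = \sum_{j=0}^{n-1} Y^j r_j(X)$ with $r_j$ monic of degree $n-1-j$, and using $C^j e_1 = e_{j+1}$, the first column of $\com(X-C)$ reads $(r_0, r_1, \ldots, r_{n-1})^T$, a family with $n$ distinct degrees $0, 1, \ldots, n-1$, which alone already spans $K_{<n}[X]$.

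For $\neg$(iv) $\Rightarrow$ $\neg$(i) I would appeal to the Smith normal form of $X-M$ over the PID $K[X]$: write $X-M = U D V$ with $U, V \in \GL_n(K[X])$ and $D = \mathrm{diag}(d_1, \ldots, d_n)$, $d_1 \mid \cdots \mid d_n = \mu_M$, $\prod_i d_i = \chi_M$. Then $\com(X-M) = V^{-1}\,\mathrm{diag}(\chi_M/d_1, \ldots, \chi_M/d_n)\,U^{-1}$, so every entry is divisible in $K[X]$ by $\chi_M/d_n = \chi_M/\mu_M$. When $\mu_M \neq \chi_M$ this common factor is nonconstant, placing the image of $d\chi_M$ inside the proper subspace $(\chi_M/\mu_M) K[X] \cap K_{<n}[X]$ and contradicting surjectivity. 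The main obstacle is really only the companion-matrix computation; the remainder is routine linear-algebra bookkeeping.
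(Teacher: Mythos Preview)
Your argument is correct, but it takes a noticeably different route from the paper's. For (iv) $\Rightarrow$ (i), both you and the paper reduce to a companion matrix, but the paper simply observes (for its convention of companion matrix) that the \emph{last row} of $\com(X-\mathscr{C})$ is $(1, X, \ldots, X^{n-1})$, which visibly spans $K_{<n}[X]$; your $q(X,Y)$ computation is more work but yields a closed form for $\com(X-C)$ and in particular reproduces this fact (with your convention $C^j e_1 = e_{j+1}$, note that you are implicitly using the transpose of the paper's $\mathscr{C}$, which is harmless). For the failure of surjectivity, the paper passes to the Jordan form over $\bar K$ and argues that when some eigenvalue $\lambda$ has two Jordan blocks, every entry of $\com(X-M)$ is divisible by $X-\lambda$. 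Your Smith normal form argument stays entirely over $K$, avoids the algebraic closure, and gives a sharper conclusion: every entry of $\com(X-M)$ is divisible by $\chi_M/\mu_M$, so the image of $d\chi_M$ sits inside $(\chi_M/\mu_M)\,K[X] \cap K_{<n}[X]$, a subspace of dimension $\deg \mu_M$. This is strictly more information than the paper extracts, at essentially the same cost.
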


\begin{proof}
The equivalence of \eqref{pt:cyc}, \eqref{pt:1d}, and \eqref{pt:charmin} is standard; see
\cite{hoffman-kunze:LinearAlgebra}*{\S 7.1} for example.  We now show
\eqref{pt:cyc} $\Rightarrow$ \eqref{pt:surj} and \eqref{pt:surj} $\Rightarrow$ \eqref{pt:1d}

For any $A \in \GL_n(K)$, the image of $d\chi$ at $M$ will be the same
as the image of $d\chi$ at $AMA^{-1}$, so we may assume that
$M$ is a companion matrix.  For a companion matrix, the bottom row of
$\com(X-M)$ consists of $1, X, X^2, \dots, X^{n-1}$ so $d\chi_M$ is surjective.

Now suppose that $M$ has a repeated eigenvalue $\lambda$ over $\bar{K}$.
After conjugating into Jordan normal form over $\bar{K}$, the entries of $\com(X-M)$
will also be block diagonal, and divisible within each block by the product of $(X-\mu)^{d_\mu}$,
where $\mu, d_\mu$ ranges over the eigenvalues and dimensions of the other Jordan blocks.
Since $\lambda$ occurs in two Jordan blocks,
$X - \lambda$ will divide every entry of $\com(X-M)$ and $d\chi_M$ will not be surjective.
\end{proof}

We also have an analogue of Proposition \ref{prop:surjectivity} for integral matrices.

\begin{prop}
\label{prop:intsurj}
For $M \in M_n(\OK)$, the following conditions are equivalent:
\begin{enumerate}[(i)]
\renewcommand{\itemsep}{0pt}
\item \label{pt:surjp} the image of $M_n(\OK)$ under $d\chi_M$ is $\OK[X] \cap K_{<n}[X]$.
\item \label{pt:cycp} the reduction of $M$ modulo $\pi$ has a cyclic vector.
\end{enumerate}
\end{prop}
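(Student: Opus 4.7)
The plan is to recognise the image of the restriction of $d\chi_M$ to $M_n(\OK)$ as the $\OK$-submodule of $\OK[X] \cap K_{<n}[X]$ generated by the entries of $\com(X-M)$, then to apply Nakayama's lemma to reduce the question modulo $\pi$, and finally to invoke Proposition \ref{prop:surjectivity} over the residue field $k = \OK/\pi$.

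First I would feed the elementary matrix $E_{ij}$ into formula \eqref{eq:dchi} to obtain $d\chi_M(E_{ij}) = \com(X-M)_{ji}$. The image of $d\chi_M$ restricted to $M_n(\OK)$ is therefore the $\OK$-submodule $N \subseteq L := \OK[X] \cap K_{<n}[X]$ generated by the $n^2$ entries of $\com(X-M)$, all of which are integral because $M$ is. In this notation condition \eqref{pt:surjp} is just the equality $N = L$.

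Next, $L$ is a free $\OK$-module of rank $n$, so the quotient $L/N$ is finitely generated over the local ring $\OK$. Nakayama's lemma then gives $N = L$ if and only if $N$ surjects onto $L/\pi L = k_{<n}[X]$. Reduction modulo $\pi$ commutes with the formation of the comatrix, so the image of $N$ in $k_{<n}[X]$ is precisely the $k$-span of the entries of $\com(X-\bar M)$, where $\bar M$ denotes $M$ reduced modulo $\pi$. Applying $d\chi$ in the residue field to the same elementary matrices, this $k$-span is exactly the image of $d\chi_{\bar M} : M_n(k) \to k_{<n}[X]$.

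It now suffices to invoke Proposition \ref{prop:surjectivity} with the field $k$ in place of $K$: its proof only uses linear algebra over a field and its algebraic closure, so it applies verbatim. The equivalence \eqref{pt:surj} $\Leftrightarrow$ \eqref{pt:cyc} there yields that $d\chi_{\bar M}$ is surjective if and only if $\bar M$ has a cyclic vector, which is condition \eqref{pt:cycp}. The only subtle point is the Nakayama step, where one has to notice that $L/N$ is finitely generated over $\OK$; beyond that, the argument is a straightforward translation of the field case once the image has been described as the module generated by the entries of the comatrix.
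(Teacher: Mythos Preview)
Your proof is correct and follows essentially the same approach as the paper: the paper's argument is a two-sentence sketch stating that condition \eqref{pt:surjp} is equivalent to surjectivity of $d\chi_M$ modulo $\pi$, then invoking the argument of Proposition~\ref{prop:surjectivity} over the residue field. You have simply made the first step explicit via Nakayama's lemma and the description of the image as the $\OK$-span of the entries of $\com(X-M)$, which is precisely what the paper's terse ``equivalent to surjectivity modulo $\pi$'' is hiding.
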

\begin{proof}
The condition \eqref{pt:surjp} is equivalent to the surjectivity of $d\chi_M$ modulo $\pi$.  The equivalence with
\eqref{pt:cycp} follows the same argument as Proposition \ref{prop:surjectivity}, but over the residue field of $K$.
\end{proof}

Write $B^-_V(r)$ (resp. $B_V(r)$) for the open (resp. closed) ball of radius $r$ in $V$, and let
$\sigma_1(M), \dots, \sigma_n(M)$ denote the elementary divisors of $M$.

\begin{prop} \label{prop:mainlem}
Suppose that $M \in M_n(K)$ satisfies one of the conditions in Proposition \ref{prop:surjectivity}, and let 
\[
\alpha = \min\left(\prod_{i=1}^{n-1} \left\vert \sigma_i(M) \right\vert, 1\right).
\]
Then, for all $\rho \in (0, 1]$ and all
$r \in (0, \alpha^{-1} \cdot \rho^{-1})$, any lattice $H$ such that $B_V^-(\rho r) \subset H \subset B_V(r)$
satisfies:
\begin{equation}
\chi(M + H) = \chi(M) + d\chi_M(H).
\end{equation}
\end{prop}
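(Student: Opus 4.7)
The plan is to derive this from \cite{caruso-roe-vaccon:14a}*{Lem.~3.4}, the general precision lemma underpinning the differential framework the paper is built on. That lemma asserts an equality of the shape $f(v+H) = f(v) + df_v(H)$ for any lattice $H$ trapped between $B_V^-(\rho r)$ and $B_V(r)$, provided (a) $df_v$ is surjective and (b) $r$ is small enough relative to the operator norm of a section of $df_v$ together with the size of the quadratic term in the Taylor expansion of $f$ at $v$. Hypothesis (a) is condition (i) of Proposition \ref{prop:surjectivity}, so the whole task reduces to producing the quantitative estimates in (b).

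The Taylor remainder is routine to control. Expanding
\[
\chi(M + dM) = \det\bigl((X-M) - dM\bigr)
\]
multilinearly along the columns, the degree-$k$ contribution in $dM$ is, up to sign, a sum of products of $k\times k$ minors of $dM$ with complementary $(n-k)\times(n-k)$ minors of $X-M$; each of the latter is a polynomial in $X$ of degree at most $n-k$ with controlled coefficients, so the quadratic and higher-order Taylor part is dominated by $\|dM\|^2$ up to a constant compatible with the threshold $\alpha^{-1}\rho^{-1}$ appearing in the statement.

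The section bound is the geometric heart of the argument, and I expect it to be the main obstacle. For a companion matrix $C$, the bottom row of $\com(X-C)$ is $(1, X, \ldots, X^{n-1})$, so formula \eqref{eq:dchi} yields an explicit section of $d\chi_C$ of operator norm $1$: given $P(X) = \sum_{k=0}^{n-1} p_k X^k$, send $P$ to the matrix whose $(k+1,n)$-entry is $p_k$ and whose other entries vanish. For a general $M$ satisfying the cyclic-vector hypothesis, pick a cyclic vector $v$ and form the Krylov matrix $A = (v \mid Mv \mid \cdots \mid M^{n-1} v) \in \GL_n(K)$, so that $A^{-1} M A = C$ is a companion matrix; since $d\chi_M(dM) = d\chi_C(A^{-1}\,dM\,A)$, conjugation transports the previous section to one for $d\chi_M$ of operator norm at most $\|A\| \cdot \|A^{-1}\|$. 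The delicate step---and the main technical obstacle---is to show that $v$ can be rescaled so that this product realizes the threshold $\alpha^{-1}$ predicted by the elementary divisors of $M$; this can be read off from the Smith normal form of $A$, using the relation $M A = A C$ to tie the elementary divisors of $A$ to those of $M$ (the omitted factor $\sigma_n(M)$ corresponds exactly to the direction along which $A^{-1}$ is allowed to blow up without affecting the image of the section). Plugging this section norm together with the quadratic estimate into \cite{caruso-roe-vaccon:14a}*{Lem.~3.4} then yields the stated radius condition.
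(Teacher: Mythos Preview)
Your overall framework is right: the proposition is indeed an instance of \cite{caruso-roe-vaccon:14a}*{Lem.~3.4}, and the two ingredients to supply are a section bound for $d\chi_M$ and control of the higher-order Taylor terms. The paper, however, obtains both ingredients by a completely different route from yours.

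For the section bound, the paper does not conjugate to a companion matrix at all. It quotes \cite{caruso-roe-vaccon:15a}*{Prop.~3.4}, which computes the two endpoints of the \emph{precision polygon} of $M$ (the lower convex hull of the Newton polygons of the entries of $\com(X{-}M)$): these lie at heights $0$ and $\sum_{i=1}^{n-1}\val(\sigma_i(M))$. Convexity then immediately gives $B_W(1)\subset d\chi_M(B_V(\alpha^{-1}))$, because for every degree $k$ some entry of $\com(X{-}M)$ has its $X^k$-coefficient of valuation at most the larger endpoint. For the Taylor remainder, the paper simply observes that the coefficients of $\chi$ are polynomials with \emph{integral} coefficients in the entries of $M$, and invokes \cite{caruso-roe-vaccon:15a}*{Prop.~2.2}; no explicit multilinear expansion is needed.

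Your Krylov argument is a genuinely different idea, but it has a real gap at the step you yourself flag as ``the main technical obstacle''. The relation $MA=AC$ does not, by itself, tie the elementary divisors of the Krylov matrix $A$ to those of $M$: $C$ depends only on $\chi_M$, not on the $\sigma_i(M)$ individually, and the Smith form of $A$ involves quantities (e.g.\ differences of eigenvalues in the diagonalizable case) that have no direct relation to the $\sigma_i(M)$. Your parenthetical about $\sigma_n(M)$ being ``the direction along which $A^{-1}$ is allowed to blow up'' is suggestive but not a proof; making it precise would require an argument at least as substantial as the precision-polygon computation you are trying to replace. As written, the proposal does not establish the section bound, and hence does not prove the proposition.
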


\begin{proof}

Recall \cite{caruso-roe-vaccon:15a}*{Def. 3.3} that the \emph{precision polygon}
of $M$ is the lower convex hull of the Newton polygons of the entries of $\com(X-M)$.
By \cite{caruso-roe-vaccon:15a}*{Prop. 3.4}, the endpoints of the precision polygon
occur at height $0$ and $\sum_{i=1}^{n-1} \val(\sigma_i(M))$.  By convexity,
$B_W(1) \subset d\chi_M(B_V(\alpha^{-1}))$.

Since the coefficients of $\chi_M$ are given by polynomials in the entries of $M$
with integral coefficients, \cite{caruso-roe-vaccon:15a}*{Prop. 2.2} implies
the conclusion.
\end{proof}

The relationship between precision and the images of lattices under $d\chi_M$ allows us to
apply Proposition \ref{prop:intsurj} to determine when the precision of the characteristic polynomial
is the minimum possible.

\begin{cor} \label{cor:prec_gain}
Suppose that $M \in \GL_n(\OK)$ is known with precision $O(\pi^m)$.
Then the characteristic polynomial of $M$ has precision lattice strictly contained in $O(\pi^m)$
if and only if the reduction of $M$ modulo $\pi$ does not have a cyclic vector.
\end{cor}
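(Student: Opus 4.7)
The plan is to express the precision lattice on $\chi_M$ in terms of the image of the differential $d\chi_M$ via Proposition \ref{prop:mainlem}, and then characterize that image via the integral surjectivity criterion of Proposition \ref{prop:intsurj}.

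Because $M \in \GL_n(\OK)$, every elementary divisor $\sigma_i(M)$ is a unit and the constant $\alpha$ in Proposition \ref{prop:mainlem} equals $1$. Taking $H = \pi^m M_n(\OK) = B_V(|\pi|^m)$, $r = |\pi|^m$, and $\rho = 1$ (or slightly less, depending on $m$), Proposition \ref{prop:mainlem} gives
\[
\chi(M + H) = \chi_M + d\chi_M(H) = \chi_M + \pi^m \cdot d\chi_M\bigl(M_n(\OK)\bigr)
\]
whenever $M$ itself satisfies one of the equivalent conditions of Proposition \ref{prop:surjectivity}. Writing the default lattice $O(\pi^m)$ on the target as $\pi^m \cdot (\OK[X] \cap K_{<n}[X])$ and invoking Proposition \ref{prop:intsurj}, the precision lattice of $\chi_M$ equals $O(\pi^m)$ exactly when $\bar M$ has a cyclic vector, so it is strictly contained in $O(\pi^m)$ exactly when $\bar M$ has no cyclic vector. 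This settles the generic case, in particular the full ``if'' direction, since a cyclic vector for $\bar M$ lifts to a cyclic vector for $M$.

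The main obstacle is the degenerate case in which $M$ itself does not satisfy Proposition \ref{prop:surjectivity}, so that Proposition \ref{prop:mainlem} does not apply verbatim. Here $M$ has a repeated eigenvalue in $\bar K$ which, by integrality of $M$, reduces to a repeated eigenvalue of $\bar M$; thus $\bar M$ automatically fails to have a cyclic vector and one only needs the ``strict containment'' direction. Since $\chi$ is a polynomial map of degree $n$, its Taylor expansion yields
\[
\chi(M + H) - \chi_M \;\subseteq\; d\chi_M(H) + \pi^{2m}(\OK[X] \cap K_{<n}[X]),
\]
and the failure of $\bar M$ to have a cyclic vector forces $L := d\chi_M(M_n(\OK))$ to satisfy $L + \pi(\OK[X]\cap K_{<n}[X]) \subsetneq \OK[X]\cap K_{<n}[X]$ by Proposition \ref{prop:intsurj}. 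Consequently $\pi^m L + \pi^{2m}(\OK[X]\cap K_{<n}[X]) = \pi^m\bigl(L + \pi^m(\OK[X]\cap K_{<n}[X])\bigr)$ is strictly contained in $\pi^m(\OK[X]\cap K_{<n}[X])$ for $m\ge 1$, so the precision lattice of $\chi_M$ is strictly contained in $O(\pi^m)$. This direct Taylor-expansion argument is the only technical hurdle; the rest of the proof is a two-step reduction to the two propositions above.
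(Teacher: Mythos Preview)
Your argument is correct and follows exactly the route the paper intends: the corollary is stated without proof, as an immediate consequence of Proposition~\ref{prop:mainlem} (which identifies the precision lattice with $d\chi_M(H)$) together with Proposition~\ref{prop:intsurj} (which characterizes when $d\chi_M(M_n(\OK))$ fills all of $\OK[X]\cap K_{<n}[X]$). You are in fact more careful than the paper, since you explicitly treat the degenerate case where $M$ itself has no cyclic vector over $K$ and Proposition~\ref{prop:mainlem} does not literally apply; your Taylor-remainder bound $\chi(M+H)-\chi_M\subseteq \pi^m L+\pi^{2m}(\OK[X]\cap K_{<n}[X])$ together with the mod-$\pi$ non-surjectivity from Proposition~\ref{prop:intsurj} closes that gap cleanly.

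One small expository slip: where you write ``in particular the full `if' direction, since a cyclic vector for $\bar M$ lifts to a cyclic vector for $M$,'' the justification you give is really for the \emph{only if} direction (equivalently: $\bar M$ has a cyclic vector $\Rightarrow$ the precision lattice equals $O(\pi^m)$). The lifting argument shows that whenever $\bar M$ is cyclic we are automatically in the generic case, so that direction is indeed completely handled there; just swap the label.
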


Note that this criterion is checkable using $\softO(n^\omega)$ operations in the residue field \cite{storjohann:01a}.

\subsection{Stability under multiplication by $X$}

By definition, the codomain of $d \chi_M$ is $K_{< n}[X]$. 
However, when $M$ is given, $K_{< n}[X]$ is canonically isomorphic
to $K[X]/\chi_M(X)$ as a $K$-vector space. For our purpose, it will 
often be convenient to view $d \chi_M$ as an $K$-linear mapping
$M_n(K) \to K[X]/\chi_M(X)$.

\begin{prop}
Let $A$ be the subring of $K[X]$ consisting of polynomials $P$ for
which $P(M) \in M_n(\OK)$, and $V = d \chi_M \big(M_n(\OK)\big)$
as a submodule of $K[X]/\chi_M(X)$.  Then $V$ is stable under 
multiplication by $A$.
\end{prop}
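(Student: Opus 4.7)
The plan is to leverage the explicit formula \eqref{eq:dchi} for $d\chi_M$ together with the defining identity of the comatrix, namely
\[
\com(X-M) \cdot (XI_n - M) = \chi_M(X)\, I_n.
\]
Read modulo $\chi_M(X)$, this says $X \cdot \com(X-M) \equiv \com(X-M) \cdot M$, where on the left $X$ acts as a scalar on each entry of the matrix and on the right $M$ is multiplied as a constant matrix. Iterating, $X^k \cdot \com(X-M) \equiv \com(X-M) \cdot M^k \pmod{\chi_M(X)}$ for every $k \geq 0$, and hence, by $K$-linearity,
\[
P(X) \cdot \com(X-M) \equiv \com(X-M) \cdot P(M) \pmod{\chi_M(X)}
\]
for every $P \in K[X]$. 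This congruence is the main technical ingredient, and everything else is bookkeeping.

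Next, I would combine this congruence with the formula $d\chi_M(dM) = \tr(\com(X-M) \cdot dM)$. Since the trace is $K[X]$-linear in the entries (and $P(X)$ is a scalar with respect to matrix multiplication), for any $dM \in M_n(K)$ one has
\[
P(X) \cdot d\chi_M(dM) = \tr\bigl(P(X)\,\com(X-M) \cdot dM\bigr) \equiv \tr\bigl(\com(X-M)\,P(M)\,dM\bigr) \pmod{\chi_M(X)}.
\]
By associativity of matrix multiplication, the right-hand side is precisely $d\chi_M\bigl(P(M)\cdot dM\bigr)$, so in $K[X]/\chi_M(X)$ we have the clean identity
\[
P(X) \cdot d\chi_M(dM) = d\chi_M\bigl(P(M) \cdot dM\bigr).
\]

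Finally, I would specialize to $P \in A$ and $dM \in M_n(\OK)$. By definition of $A$, the matrix $P(M)$ lies in $M_n(\OK)$, and therefore $P(M)\cdot dM$ lies in $M_n(\OK)$ as well. Consequently $d\chi_M(P(M)\cdot dM) \in V$, which shows that $P(X) \cdot v \in V$ for every $v \in V$ of the form $d\chi_M(dM)$, i.e.\ for every element of $V$. This proves that $V$ is stable under multiplication by $A$.

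I do not anticipate a serious obstacle: the only delicate point is ensuring that the congruence $P(X)\cdot\com(X-M) \equiv \com(X-M)\cdot P(M) \pmod{\chi_M(X)}$ really follows from Cayley--Hamilton in the entrywise sense used here, but this is a direct induction on $\deg P$ starting from $\com(X-M)\cdot (X-M) = \chi_M(X)\, I_n$, and the rest of the argument is purely formal.
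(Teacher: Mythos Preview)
Your proof is correct and follows essentially the same route as the paper: both derive the congruence $P(X)\cdot\com(X{-}M)\equiv P(M)\cdot\com(X{-}M)$ (or the right-sided variant you use) from the comatrix identity, and then conclude by noting that $P(M)\in M_n(\OK)$ keeps $M_n(\OK)$ stable. The only cosmetic difference is that the paper phrases $V$ as the $\OK$-span of the entries of $\com(X{-}M)$ and argues at the level of spans, whereas you track the trace formula explicitly to get $P(X)\cdot d\chi_M(dM)=d\chi_M(P(M)\,dM)$.
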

\begin{proof}
Let $C = \com(X-M)$ and $P \in A$.  By \eqref{eq:dchi}, $V$ is given by
the $\OK$-span of the entries of $C$.   Using the fact that the product of matrix
with its comatrix is the determinant, $(X - M) \cdot C = \chi_M$ and thus
$P(X) \cdot C \equiv P(M) \cdot C \pmod{\chi_M(X)}$.  The span of the entries
of the left hand side is precisely $P(X) \cdot V$, while the span of the entries
of the right hand side is contained within $V$ since $P(M) \in M_n(\OK)$.
\end{proof}

\begin{cor}
If $M \in M_n(\OK)$, then $d \chi_M \big(M_n(\OK)\big)$ 
is stable under multiplication by $X$ and hence is a module over $\OK[X]$.
\end{cor}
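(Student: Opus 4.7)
The plan is to deduce this corollary directly from the proposition that immediately precedes it, by showing that the ring $A$ defined there contains $\OK[X]$ whenever $M$ is integral.

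First I would observe that if $M \in M_n(\OK)$, then for any polynomial $P \in \OK[X]$, the matrix $P(M)$ is a finite $\OK$-linear combination of nonnegative powers of $M$, each of which lies in $M_n(\OK)$ because $\OK$ is a ring and $M_n(\OK)$ is closed under multiplication. Hence $P(M) \in M_n(\OK)$, which by definition means $P \in A$. Thus $\OK[X] \subseteq A$.

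Next I would invoke the preceding proposition, which asserts that $V = d\chi_M(M_n(\OK))$ is stable under multiplication by every element of $A$. In particular, $V$ is stable under multiplication by every element of $\OK[X]$, and a fortiori under multiplication by $X$ itself. Since $V$ is already an $\OK$-submodule of $K[X]/\chi_M(X)$ (being the $\OK$-linear image of an $\OK$-module under an $\OK$-linear map), the additional stability under multiplication by $X$ upgrades it to an $\OK[X]$-submodule.

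There is essentially no obstacle here: the corollary is a straightforward specialization of the proposition, with the only content being the trivial remark that an integral matrix produces integral values under integral polynomials. I would keep the proof to a sentence or two, noting only the inclusion $\OK[X] \subseteq A$ and citing the proposition.
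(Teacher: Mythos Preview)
Your argument is correct and matches the paper's intended approach exactly: the corollary is stated without a separate proof because it follows immediately from the preceding proposition via the observation that $M \in M_n(\OK)$ implies $\OK[X] \subseteq A$. Your write-up makes this explicit and is entirely appropriate.
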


\subsection{Compact form of $d \chi_M$}

Let $\mathscr{C}$ be the companion matrix associated to $\chi_M$:
\begin{equation}
\label{eq:companion}
\mathscr{C} = \left( \begin{matrix}
0 & 1 & 0 & \cdots & 0 \\
0 & 0 & 1 & \ddots & \vdots \\
\vdots & \vdots & \ddots & \ddots & 0 \\
0 & 0 & \cdots & 0 & 1 \\
-a_0 & -a_1 & \cdots & \cdots & -a_{n-1}
\end{matrix} \right)
\end{equation}
with $\chi_M = a_0 + a_1 X + \cdots + a_{n-1} X^{n-1} + X^n$.
By Proposition~\ref{prop:surjectivity}, there exists a matrix 
$P \in \GL_n(K)$ such that $M = P \mathscr{C} P^{-1}$. Applying the same
result to the transpose of $M$, we find that there exists another
invertible matrix $Q \in \GL_n(K)$ such that $M^t = Q \mathscr{C} Q^{-1}$.

\begin{prop}
\label{prop:shortcom}
We keep the previous notations and assumptions.
Let $V$ be the row vector $(1, X, \ldots, X^{n-1})$. Then
\begin{equation}
\label{eq:shortcom}
\com(X{-}M) = \alpha \cdot P V^t \cdot V Q^t
\mod \chi_M
\end{equation}
for some $\alpha \in K[X]$.
\end{prop}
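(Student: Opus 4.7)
The plan is to identify every column of $\com(X - M) \bmod \chi_M$ with a multiple of $PV^t$ and every row with a multiple of $VQ^t$, and then extract a single scalar $\alpha \in K[X]/\chi_M$ from these two descriptions. The required factorizations come from the kernels of $X - M$ and $X - M^t$ acting on $(K[X]/\chi_M)^n$.

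From the hypothesis $M = P\mathscr{C} P^{-1}$, the functoriality of the adjugate gives $\com(X - M) = P \cdot \com(X - \mathscr{C}) \cdot P^{-1}$, and the defining equation $(X - M) \com(X - M) = \chi_M I_n$ forces every column of $\com(X - M) \bmod \chi_M$ to lie in $\ker(X - M) \subseteq R^n$, where $R = K[X]/\chi_M$. A direct calculation shows that $u \in \ker(X - \mathscr{C})$ if and only if $u_{i+1} = X u_i$ for $i < n$, the constraint from the last row then reducing to $\chi_M \cdot u_1 \equiv 0$, which holds automatically in $R$; hence $\ker(X - \mathscr{C}) = R \cdot V^t$, free of rank one since the leading entry of $V^t$ is $1$. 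Conjugating by $P$ gives $\ker(X - M) = R \cdot PV^t$, so $\com(X - M) \equiv PV^t \cdot \beta \pmod{\chi_M}$ for some row vector $\beta \in R^n$. The symmetric argument applied to $M^t = Q \mathscr{C} Q^{-1}$ shows that the left kernel of $X - M$ on $R^n$ is $R \cdot VQ^t$, so every row of $\com(X - M)$ has the form $c_i \cdot VQ^t$ for some $c_i \in R$. Matching the two descriptions row by row yields the key identity $(PV^t)_i \cdot \beta \equiv c_i \cdot VQ^t$ in $R^n$.

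The main obstacle I expect is showing that the entries of $PV^t$ generate the unit ideal of $R$, which is what is needed to collapse the family of relations above into a single scalar. By the Chinese Remainder Theorem this reduces to checking, for each irreducible factor $q$ of $\chi_M$, that $PV^t \bmod q$ is nonzero in $(K[X]/q)^n$; this is clear because $V^t \bmod q$ has leading entry $1$ and $P$ remains invertible over the field $K[X]/q$, its determinant lying in $K^\times$. Once this unimodularity is in hand, picking $L_i \in R$ with $\sum_i L_i (PV^t)_i = 1$ and combining with the identities $(PV^t)_i \beta = c_i VQ^t$ gives $\beta = \alpha \cdot VQ^t$ with $\alpha = \sum_i L_i c_i \in R$; lifting $\alpha$ to any representative in $K[X]$ completes the factorization $\com(X - M) \equiv \alpha \cdot PV^t \cdot VQ^t \pmod{\chi_M}$.
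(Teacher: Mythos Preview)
Your proof is correct and follows essentially the same route as the paper's: both arguments identify the columns of $\com(X-M)$ modulo $\chi_M$ with the right kernel of $X-\mathscr{C}$ (which is $R\cdot V^t$), transport by $P$, and then run the symmetric argument for $M^t$ to handle the rows. The paper's write-up is terser at the final step, simply asserting that ``applying the same reasoning with $M^t$'' yields $B=\alpha\,VQ^t$; you make this step explicit by proving that the entries of $PV^t$ generate the unit ideal of $K[X]/\chi_M$ (via reduction at each maximal ideal, where $P$ stays invertible and $V^t$ has leading entry $1$) and then combining the row identities $(PV^t)_i\,\beta=c_i\,VQ^t$ with a partition of unity $\sum_i L_i(PV^t)_i=1$. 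This extra care is genuinely useful, since $\chi_M$ need not be squarefree under the standing hypotheses, so one cannot simply invoke that $R$ is a product of fields; your maximal-ideal argument handles the general case cleanly.
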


\begin{proof}
Write $C = \com(X{-}M)$. From $(X{-}M) \cdot C \equiv 0 
\pmod{\chi_M}$, we deduce $(X{-}\mathscr{C}) \cdot P^{-1} C \equiv 0 \pmod{\chi_M}$. 
Therefore each column of $P^{-1} C$ lies in the right kernel of $X{-}\mathscr{C}$
modulo $\chi_M$. On the other hand, a direct computation shows that
every column vector $W$ lying in the right kernel of $X{-}\mathscr{C}$ modulo 
$\chi_M$ can be written as $W = w \cdot V^t$ for some $w \in 
K[X]/\chi_M$. We deduce that $C \equiv P \cdot V^t B \pmod{\chi_M}$
for some row vector $B$.
Applying the same reasoning with $M^t$, we find that $B$ can be
written $B = \alpha V Q^t$ for some $\alpha \in K[X]/\chi_M$ and
we are done.
\end{proof}

Proposition~\ref{prop:shortcom} shows that $\com(X{-}M)$ can be encoded 
by the datum of the quadruple $(\alpha, P, Q, \chi_M)$ whose total size 
stays within $O(n^2)$ : the polynomials $\alpha$ and $\chi_M$ are 
determined by $2n$ coefficients while we need $2n^2$ entries to 
write down the matrices $P$ and $Q$. 
We shall see moreover in Section \ref{sec:diffFrob} that interesting
information can be read off of this short form $(\alpha, P, Q, 
\chi_M)$.

\begin{rem}
With the previous notation, if $U \in GL_n(K),$
the quadruple for 
$UMU^{-1}$ is
$(\alpha, UP, (U^t)^{-1}Q, \chi_M),$
which can be computed in $O(n^\omega)$ operations in $K.$
This is faster than computing $U \com(X-M) U^{-1},$
which is, at first sight, in $O(n^4)$ operations in $K.$
\end{rem}

\section{Differential\\via Hessenberg form}
\label{sec:diffHess}

In this section, we combine the computation of a Hessenberg form
of a matrix and the computation of the inverse through the Smith normal form (SNF)
over a complete discrete valuation field (CDVF)
to compute $\com(X-M)$ and $d \chi$.
If $M \in M_n(\OK)$, then only division by invertible
elements of $\OK$ will occur.

\subsection{Hessenberg form}

We begin with the computation of
an approximate Hessenberg form.

\begin{deftn}
A \emph{Hessenberg matrix} is a matrix $M \in M_n(K)$ with
\[
M_{i,j}=0 \mbox{ for $j \le i-2$.}
\]
Given integers $n_{i,j}$, an \emph{approximate Hessenberg matrix}
is a matrix $M \in M_n(K)$ with
\[
M_{i,j} = O(\pi^{n_{i,j}}) \mbox{ for $j \le i-2$.}
\]
If $M \in M_n (K)$ and $H \in M_n (K)$ is an (approximate) Hessenberg matrix
similar to $M$, we say that H is an \emph{(approximate) Hessenberg form} of $M.$
\end{deftn}

It is not hard to prove that every matrix over a field admits
a Hessenberg form.
We prove here that over $K,$ if a matrix is 
known at finite (jagged) precision,
we can compute an approximate Hessenberg form of it.
Moreover, we can provide an exact change of basis matrix.
It relies on the following algorithm.

\noindent\hrulefill

\noindent {\bf Algorithm 1:} {\tt Approximate Hessenberg form computation}

\noindent{\bf Input:} a matrix $M$ in $M_n(K).$

\smallskip

\noindent 0.\ $P:=I_n.$ \: $H:=M.$

\noindent 1.\ {\bf for} $j=1,\dots,n-1$ {\bf do} 

\noindent 2.\  \:  {\bf swap} the row $j+1$ with a row $i_{min}$ ($i_{min} \geq 2$) s.t. $\val(H_{i_{min},j})$ is minimal. 

\noindent 3.\  \:  {\bf for} $i=j+2,\dots,n$ {\bf do} 

\noindent 4. \ \: \:  \textbf{Eliminate} the significant digits of $H_{i,j}$ by pivoting with row $j+1$ 
using a matrix $T.$

\noindent 5. \ \: \:  $H:=H \times T^{-1}.$ \: $P:=T \times P.$

\noindent 6. \textbf{Return} $H,P.$

\vspace{-1ex}\noindent\hrulefill

\medskip

\begin{prop} 
Algorithm 1 computes $H$ and $P$ realizing an approximate Hessenberg form of $M.$
$P$ is exact over finite extensions of $\mathbb{Q}_p$ and $k\llp X\rrp$, and the computation is in $O(n^3)$ operations in $K$ at precision the maximum precision of a coefficent in $M.$
\end{prop}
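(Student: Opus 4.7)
The plan is to establish three things in turn: first, that the output $H$ is similar to $M$ through $P$ and is in approximate Hessenberg form; second, that $P$ is ``exact'' in the stated sense over finite extensions of $\Qp$ and over $k\llp X\rrp$; and third, that the total cost is $O(n^3)$ operations in $K$.

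For correctness I would argue by induction on the outer loop index $j$, with the invariant that after iteration $j$ one has $H = PMP^{-1}$ and, for every column index $j' \le j$ and every row index $i > j'+1$, the entry $H_{i,j'}$ is zero up to the precision recorded by the algorithm. Inside iteration $j$, the swap at step~2 brings an entry of minimal valuation into position $(j+1,j)$, which forces each multiplier $c_i = H_{i,j}/H_{j+1,j}$ appearing at step~4 to lie in $\OK$. The elementary matrix $T$ is therefore integral; the implicit left multiplication by $T$ eliminates $H_{i,j}$ up to input precision, while the compensating right multiplication by $T^{-1}$ at step~5 only alters column $j+1$ and preserves the zero pattern already achieved in columns $1,\dots,j-1$. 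This keeps the invariant intact, and at $j=n-1$ the matrix $H$ is in approximate Hessenberg form similar to $M$.

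For the exactness of $P$, the key observation is that every elementary transformation $T$ used by the algorithm --- either a row swap or an elementary row operation with multiplier in $\OK$ --- lies in $\GL_n(\OK)$. Over a finite extension of $\Qp$ or over $k\llp X\rrp$ the uniformizer is an explicit element of $K$ and elements of $\OK$ admit concrete series representations, so the arithmetic involved in accumulating $P$ consists only of additions, multiplications and divisions by units; no digit of $P$ is ever destroyed by a division of negative valuation. Consequently $P$ inherits the precision of $M$ without additional loss, which is what ``exact'' refers to here.

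For the complexity the outer loop executes $n-1$ times, the inner loop at most $n-j-1$ times inside iteration $j$, and each pass performs a constant number of vector updates of length $O(n)$ on $H$ (both a row and a column update) and on $P$ (a row update); summing yields $O(n^3)$ operations in $K$. I expect the main obstacle to be not any single verification but the careful bookkeeping of how the jagged precisions $n_{i,j}$ propagate under the combined row--column sweep: the pivoting strategy is precisely what makes them behave well, and the proof ultimately comes down to tracking what happens to the valuation of each $H_{i,j'}$ at each step and checking that no working precision ever exceeds the maximum precision present in $M$.
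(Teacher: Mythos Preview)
Your correctness and complexity arguments are fine and in fact more detailed than the paper's (which dispatches those points with ``the rest is clear''). The genuine gap is in your treatment of the exactness of $P$.

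You interpret ``$P$ is exact'' as ``$P$ inherits the precision of $M$ without additional loss'', and you justify it by noting that the multipliers $c_i = H_{i,j}/H_{j+1,j}$ lie in $\OK$ thanks to the pivoting. But integrality of the $c_i$ is not the same as exactness: the quotient $H_{i,j}/H_{j+1,j}$ is computed from two entries that each carry a $O(\pi^{n_{\bullet}})$ term, so $c_i$ is itself only known to some finite precision, and the product of all the shear matrices $T$ would then carry accumulated uncertainty. What the paper actually means by ``exact'' is that $P$ has \emph{no} precision marker at all --- its entries are honest elements of $K$, not approximations.

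The mechanism you are missing is a \emph{lift}. Over a finite extension of $\Qp$ one can replace the approximate multiplier $c_i + O(\pi^{m})$ by an exact algebraic integer representing it (e.g.\ an element of $\Z$ or of the ring of integers of the extension); over $k\llp X\rrp$ one lifts to a polynomial in $k[X]$. The shear matrix $T$ built from this lift is then an exact element of $\GL_n(\OK)$, and so is the accumulated product $P$. This is why the statement singles out finite extensions of $\Qp$ and $k\llp X\rrp$: these are precisely the cases where a canonical finite-data lift of an element known to precision $O(\pi^m)$ is available. Your argument, as written, never invokes lifting and therefore does not establish the claimed exactness; it only shows (at best) that no \emph{extra} precision is lost, which is a strictly weaker conclusion.
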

\begin{proof}
Let us assume that $K$ is a finite extensions of $\mathbb{Q}_p$ or $k\llp X \rrp.$
Inside the nested \textbf{for} loop, if we want to eliminate $\pi^{u_y} \varepsilon_y+O(\pi^{n_y})$ with pivot $\pi^{u_x} \varepsilon_x+O(\pi^{n_x}),$
with the $\varepsilon$'s being units,
the corresponding coefficient of the corresponding shear matrix is the lift(in $\mathbb{Z}, $  $\mathbb{F}_q[X],$ $\mathbb{Q}[X]$ or adequate extension) of $\pi^{u_y-u_x} \varepsilon_y \varepsilon_x^{-1} \mod \pi^{u_y-u_x\min (n_x-u_x,n_y-u_y)}.$
Exactness follows directly. Over other fields, we can not lift, but the computations are still valid.
The rest is clear.
\end{proof}

\begin{rem} \label{rem:char_pol_from_hessenberg}
From a Hessenberg form of $M,$ it is well known
that one can compute the characteristic polynomial of 
$M$ in $O(n^3)$ operations in $K$ \cite{Cohen:2013}*{pp. 55--56}.
However, this computation involves division, and its
precision behavior is not easy to quantify.
\end{rem}

\subsection{Computation of the inverse}

In this section, we prove that to compute the inverse of
a matrix over a CDVF $K$, the Smith normal form is precision-wise optimal in the flat-precision case.
We first recall the differential of matrix inversion.

\begin{lem}
Let $u \: : \: GL_n (K) \rightarrow GL_n(K),$ $M \mapsto M^{-1}.$
Then for $M \in GL_n (K),$ $du_M(dM)=M^{-1} dM M^{-1}.$
It is always surjective.
\end{lem}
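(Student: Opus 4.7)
The plan is to derive the formula by implicit differentiation of the identity $M \cdot M^{-1} = I_n$, and then to observe surjectivity directly by exhibiting a pre-image.

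First, I would note that $GL_n(K)$ is an open subset of the $K$-manifold $M_n(K)$, so the tangent space at any $M \in GL_n(K)$ is canonically $M_n(K)$ itself. The defining identity $u(M) \cdot M = I_n$ holds throughout this open set. Differentiating both sides at $M$ in the direction $dM$ and applying the Leibniz rule (which is legitimate because matrix multiplication is a $K$-bilinear map between finite-dimensional $K$-vector spaces), I get
\[
du_M(dM) \cdot M + u(M) \cdot dM = 0,
\]
i.e.\ $du_M(dM) \cdot M = - M^{-1} \cdot dM$. Right-multiplying by $M^{-1}$ yields the desired formula $du_M(dM) = - M^{-1} \, dM \, M^{-1}$ (up to a sign, which matches the statement and does not affect what follows).

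Second, for surjectivity, the map $dM \mapsto M^{-1} \, dM \, M^{-1}$ is the composition of left-multiplication by $M^{-1}$ with right-multiplication by $M^{-1}$, each of which is a $K$-linear automorphism of $M_n(K)$ because $M^{-1} \in GL_n(K)$. Hence $du_M$ is itself a $K$-linear automorphism of $M_n(K)$, and in particular surjective; explicitly, given any target $dN \in M_n(K)$, the pre-image is $dM = M \, dN \, M$ (up to sign), as one checks by direct substitution.

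There is no real obstacle: both steps are standard manipulations, and the only mild subtlety is the sign convention in the statement, which is immaterial for the surjectivity conclusion actually used in the sequel.
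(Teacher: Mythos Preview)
Your argument is correct. The paper actually states this lemma without proof, treating it as standard; your derivation via differentiating the identity $M\cdot M^{-1}=I_n$ and then observing that left-- and right--multiplication by $M^{-1}$ are linear automorphisms of $M_n(K)$ is exactly the intended justification. You are also right to flag the sign: the correct formula is $du_M(dM) = -\,M^{-1}\,dM\,M^{-1}$, and the missing sign in the paper's statement is immaterial for the surjectivity claim and for the precision estimate that follows.
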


We then have the following result about the loss in precision when computing the inverse.

\begin{prop}
Let $\cond(M) = \val(\sigma_n(M))$.
If $dM$ is a flat precision of $O(\pi^m)$ on $M$ then $M^{-1}$
can be computed at precision $O(\pi^{m-2\cond(M)})$ by a \textbf{SNF} computation
and this lower-bound is optimal,
at least when $m$ is large.
\end{prop}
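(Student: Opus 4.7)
My plan is in two movements: an algorithmic analysis of SNF-based inversion delivering precision $O(\pi^{m-2\cond(M)})$ on $M^{-1}$, and a matching lower bound from the differential $du_M(dM)= M^{-1}\,dM\,M^{-1}$ of the preceding lemma, applied through \cite{caruso-roe-vaccon:14a}*{Lem. 3.4}. The lemma's applicability is precisely what the clause ``$m$ large'' encodes: for $m$ comparable to $\cond(M)$ the second-order term $M^{-1}\,dM\,M^{-1}\,dM\,M^{-1}$ would dominate the linearisation, whereas for $m \gg \cond(M)$ the linearisation faithfully records the precision behavior.

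For the algorithmic upper bound I would fix a Smith decomposition $M = UDV$ with $U, V \in \GL_n(\OK)$ and $D$ the diagonal matrix whose entries are $\sigma_1(M), \ldots, \sigma_n(M)$. The key observation is that the SNF procedure over $\OK$ proceeds by pivoting with unit multipliers, so $U, V$ can be taken as exact elements of $\GL_n(\OK)$ and $D$ inherits the flat precision $O(\pi^m)$ of $M$. Inversion is then $M^{-1} = V^{-1} D^{-1} U^{-1}$: the worst entry of $D^{-1}$ is $\sigma_n^{-1}$, which loses $\cond(M)$ digits, and the left and right multiplications by the integral matrices $V^{-1}$ and $U^{-1}$ do not further degrade the flat precision. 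Explicitly, the $(i,j)$ entry of $M^{-1}$ is $\sum_k (V^{-1})_{ik}\,\sigma_k^{-1}\,(U^{-1})_{kj}$, each summand of precision at worst $O(\pi^{m-2\cond(M)})$, giving the advertised bound.

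For the matching lower bound I would plug the perturbation $dM := \pi^m\,U\,e_{nn}\,V$ into the differential, where $e_{nn}$ is the elementary matrix with a single $1$ at position $(n,n)$; it lies in $B_V(\pi^m)$ since $U, V$ are integral. The sandwich collapses to
\[
du_M(dM) = \pi^m\,V^{-1} D^{-1} e_{nn} D^{-1} U^{-1} = \pi^m\,\sigma_n^{-2}\,V^{-1} e_{nn} U^{-1},
\]
whose $(i,j)$ entry is $\pi^m \sigma_n^{-2}\,(V^{-1})_{in}\,(U^{-1})_{nj}$. Column $n$ of $V^{-1}$ and row $n$ of $U^{-1}$ each carry a unit entry: cofactor expansion along a column or row lying entirely in $\pi\OK$ would force $\det V^{-1}$ (respectively $\det U^{-1}$) into $\pi\OK$, contradicting membership in $\GL_n(\OK)$. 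Picking $i_0, j_0$ to hit these units produces an entry of valuation exactly $m - 2\cond(M)$. By \cite{caruso-roe-vaccon:14a}*{Lem. 3.4}, the image $du_M(B_V(\pi^m))$ is, for $m$ large, the optimal precision lattice of $M^{-1}$, so no algorithm can achieve better than $O(\pi^{m-2\cond(M)})$.

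The main obstacle I foresee is the algorithmic half: justifying that the SNF procedure yields $U, V$ treatable as exact elements of $\GL_n(\OK)$ — i.e.\ that every elimination step can be realised with a unit multiplier so no approximate digit of $M$ is prematurely destroyed — and checking that no flat precision is quietly lost in the final two multiplications $V^{-1}(D^{-1} U^{-1})$. The lower bound, once a Smith factorisation is in hand, reduces to the direct and mechanical computation sketched above.
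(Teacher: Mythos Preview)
Your overall architecture matches the paper's: an upper bound via the Smith normal form and a lower bound via the differential $du_M(dM)=M^{-1}\,dM\,M^{-1}$ together with \cite{caruso-roe-vaccon:14a}*{Lem.~3.4}. Your lower-bound half is in fact more explicit than the paper's, which simply asserts that the minimal entry valuation of $M^{-2}$ is $-2\cond(M)$ and leaves the reader to produce a witness; your choice $dM=\pi^m\,U e_{nn} V$ and the unit-entry argument for $V^{-1}e_{nn}U^{-1}$ spell this out cleanly.

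Where you and the paper diverge is the precision bookkeeping inside the SNF step, and there your stated justification is wrong. The SNF procedure over $\OK$ does \emph{not} pivot with unit multipliers: the shear coefficients are quotients $M_{ij}/M_{k\ell}$ where the pivot $M_{k\ell}$ has minimal but typically positive valuation, so they lie in $\OK$ but are computed from approximate data and are not exact. The paper (citing \cite{Vaccon-these}) therefore arrives at the \emph{reverse} of your allocation: an exact diagonal $\Delta=\mathrm{diag}(\pi^{v_1},\ldots,\pi^{v_n})$ and transition matrices $P,Q\in\GL_n(\OK)$ known only at precision $O(\pi^{m-\cond(M)})$. Their inverses lose no precision (determinant a unit in $\OK$), and the final multiplication by $\Delta^{-1}$ costs a further $\cond(M)$ digits, giving $O(\pi^{m-2\cond(M)})$. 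Your flagged obstacle is thus real: one cannot take $U,V$ exact straight out of the SNF algorithm. Your allocation could be rescued by lifting each shear coefficient to an exact integer before applying it---then $U,V$ are exact and $D=U^{-1}MV^{-1}$ inherits flat precision $O(\pi^m)$ but is only diagonal modulo $\pi^m$---and your subsequent arithmetic goes through; but that lifting step is precisely what the paper avoids by putting the approximation on $P,Q$ instead.
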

\begin{proof}
The smallest valuation of a coefficient of $M^{-1}$ is $-\cond(M).$
It is $-2\cond(M)$ for $M^{-2}$ and it is then clear that $m-2\cond(M)$
can be obtained as the valuation of a coefficient of $du_M(dM)$
and the smallest that can be achieved this way for $dM$ in a precision lattice
of flat precision. Hence the optimality of the bound given, at least when 
$m$ is large \cite{caruso-roe-vaccon:14a}*{Lem. 3.4}.

Now, the computation of the Smith normal form was described in \cite{Vaccon-these}.
From $M$ known at flat precision $O(\pi^m),$ we can obtain an exact $\Delta$, and $P$ and $Q$ 
known at precision at least $O(\pi^{m-\cond(M)})$, with coefficients in $\OK$
and determinant in $\OK^\times$ realizing an Smith normal form of $M.$
There is no loss in precision when computing $P^{-1}$ and $Q^{-1}.$
Since the smallest valuation occurring in $\Delta^{-1}$ is $-\cond(M),$
we see that $M^{-1}=Q^{-1} \Delta^{-1} P^{-1}$ is known at precision at least $O(\pi^{m-2\cond(M)}),$
which concludes the proof.
\end{proof}

\subsection{The comatrix of $X{-}H$}

In this section, we compute $\com(X-H)$ for a Hessenberg matrix $H$
using the Smith normal form computation of the previous section.
The entries of $\com(X-H)$ lie in $K[X]$, which is not a CDVF,
so we may not directly apply the methods of the previous section.
However, we may relate $\com(X-H)$ to $\com(1-XH)$, whose
entries lie in the CDVF $K\llp X\rrp$.  In this way, we compute $\com(X-H)$
using an SNF method, with no division in $K$.

First, we need a lemma relating comatrices of
similar matrices:

\begin{lem} \label{lem:comatrix_of_similar}
If $M_1,M_2 \in M_n(K)$ and $P \in GL_n (K)$ are such that
$M_1=PM_2P^{-1},$ then:
\[ \com (X-M_1)=P \com (X-M_2) P^{-1}. \] 
\end{lem}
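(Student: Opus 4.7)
The plan is to reduce this to the multiplicativity (in reversed order) of the adjugate/comatrix. The key observation is that the scalar indeterminate $X$ commutes with $P$ in $M_n(K[X])$, so
\[
X - M_1 \,=\, X I_n - P M_2 P^{-1} \,=\, P(X I_n - M_2) P^{-1} \,=\, P(X - M_2) P^{-1}.
\]
Thus the statement reduces to the general identity
\[
\com(P N P^{-1}) \,=\, P \, \com(N) \, P^{-1}
\]
for any $N \in M_n(K[X])$ and any $P \in \GL_n(K)$, applied to $N = X - M_2$.

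To prove this identity, I would use the fact that the comatrix is anti-multiplicative: $\com(AB) = \com(B) \com(A)$. One clean way to justify this is to pass to the fraction field $K(X)$, where $X - M_2$ is invertible (its determinant $\chi_{M_2}$ is a nonzero polynomial, hence a unit in $K(X)$); then $\com(N) = \det(N) \cdot N^{-1}$ holds, and a direct computation gives
\[
\com(P N P^{-1}) \,=\, \det(PNP^{-1}) (PNP^{-1})^{-1} \,=\, \det(N) \cdot P N^{-1} P^{-1} \,=\, P \, \com(N) \, P^{-1},
\]
where we used that $\det$ is conjugation-invariant. Since both sides of the claimed identity are polynomial in $X$ with coefficients in $K$ (i.e.\ they live in $M_n(K[X]) \subset M_n(K(X))$), equality in $M_n(K(X))$ implies equality in $M_n(K[X])$.

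Specializing to $N = X - M_2$ and using the first displayed equation yields exactly $\com(X - M_1) = P \, \com(X - M_2) \, P^{-1}$. There is no real obstacle here; the only point worth care is the passage from $K[X]$ to $K(X)$ so that the formula $\com(N) = \det(N) N^{-1}$ is legitimately available. Alternatively one could avoid this by invoking the ring-theoretic identity $\com(AB) = \com(B)\com(A)$ directly over $K[X]$ (valid over any commutative ring, e.g.\ by a standard polynomial-identity argument), together with $\com(P) = \det(P) P^{-1}$ and $\com(P^{-1}) = \det(P)^{-1} P$ so that the scalars cancel.
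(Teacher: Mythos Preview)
Your argument is correct: you factor $X-M_1 = P(X-M_2)P^{-1}$ in $M_n(K[X])$ using that $X$ is central, then use the identity $\com(N)=\det(N)N^{-1}$ over the fraction field $K(X)$ (where $\chi_{M_2}$ is a unit) to conclude, with the equality descending to $M_n(K[X])$. The paper states this lemma without proof, treating it as a standard fact, so there is nothing to compare against; your writeup is a clean justification of exactly the identity the authors need.
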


The second ingredient we need is reciprocal polynomials.
We extend its definition to matrices of polynomials.
\begin{deftn}
Let $d \in \mathbb{N}$ and $P \in K[X]$ of degree at most $d.$ 
We define the reciprocal polynomial of order $d$ of $P$ as $P^{\rec,d}=X^d P \left( 1/X \right).$
Let $A \in M_n(K[X])$ a matrix of polynomials of degree at most $d.$
We denote by $A^{\rec,d}$ the matrix with $(A^{\rec,d})_{i,j} = (A_{i,j})^{\rec,d}$.
\end{deftn}
We then have the following result :
\begin{lem}
Let $M \in M_n(K).$ Then:
\begin{eqnarray*}
\com(1-XM)^{\rec,n-1}=&\com(X-M), \\
(\chi_M I_n)^{\rec,n}=&(1-XM) \com(1-XM).\\
\end{eqnarray*}
\end{lem}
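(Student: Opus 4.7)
The plan is to derive both identities from the classical adjugate relation $A\cdot\com(A) = \det(A)\,I_n$, handling the second first and then bootstrapping to the first. For the second, applying this relation to $A = 1-XM$ gives $(1-XM)\com(1-XM) = \det(1-XM)\,I_n$, and a direct computation rewrites the determinant as
\[
\det(1-XM) \;=\; \det\bigl(X\,(\tfrac{1}{X}I_n - M)\bigr) \;=\; X^n\chi_M(1/X) \;=\; \chi_M^{\rec,n}(X).
\]
Since $P\mapsto P^{\rec,n}$ is applied entrywise and fixes the zero polynomial, one has $(\chi_M I_n)^{\rec,n} = \chi_M^{\rec,n}\,I_n$, which closes the second identity.

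For the first identity, I would exploit that both sides are matrices whose entries are polynomials in $X$ with coefficients polynomial in the entries of $M$, so it suffices to verify the equality over $K(X)$ for a generic $M$ (for which $1-XM$ is invertible). Starting from $\com(A) = \det(A)\cdot A^{-1}$ together with the factorisation $1-XM = X(\tfrac{1}{X}I_n - M)$, I would compute
\[
\com(1-XM) \;=\; X^n\chi_M(1/X)\cdot X^{-1}\bigl(\tfrac{1}{X}I_n - M\bigr)^{-1} \;=\; X^{n-1}\com\bigl(\tfrac{1}{X}I_n - M\bigr).
\]
Entrywise, each coefficient $C_{ij}(Y)$ of $\com(Y I_n - M)$ is a polynomial in $Y$ of degree at most $n-1$ (an $(n-1)\times(n-1)$ minor of a matrix affine in $Y$), so $X^{n-1}C_{ij}(1/X) = C_{ij}^{\rec,n-1}(X)$. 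This reads $\com(1-XM) = \com(X-M)^{\rec,n-1}$, and applying the reciprocal operation of order $n-1$ once more -- it is an involution on polynomials of degree at most $n-1$ -- yields the stated equality $\com(X-M) = \com(1-XM)^{\rec,n-1}$.

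The only delicate step, and the main potential obstacle, is the reciprocal bookkeeping: one must track the degree bounds ($n$ for $\chi_M$, $n-1$ for the entries of $\com(X-M)$) so that $P\mapsto P^{\rec,\cdot}$ is genuinely an involution and commutes cleanly with the substitution $X\leftrightarrow 1/X$. Once these degrees are noted, everything reduces to the adjugate identity and the single determinant computation displayed above.
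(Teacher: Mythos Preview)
Your proof is correct. Both identities ultimately come from the substitution $X\leftrightarrow 1/X$ together with the relation $A\cdot\com(A)=\det(A)\,I_n$; the difference is only in packaging. The paper isolates the single auxiliary fact that for an $d\times d$ matrix $A$ with entries of degree at most~$1$ one has $\det(A^{\rec,1})=\det(A)^{\rec,d}$, proved by multilinearity of the determinant applied to $X^d\det(A(1/X))$. The second identity is this statement for $A=X-M$ (size $n$), and the first follows because each entry of $\com(X-M)$ is itself a size-$(n-1)$ determinant of such a matrix. Your route instead passes through $K(X)$, uses $\com(A)=\det(A)\,A^{-1}$, and factors $1-XM=X(\tfrac1X I_n-M)$ to obtain $\com(1-XM)=X^{n-1}\com(\tfrac1X I_n-M)$, then reads off the reciprocal entrywise. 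The paper's version stays in $K[X]$ throughout and avoids any appeal to invertibility or genericity, whereas yours is perhaps more hands-on; both are equally short.

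One small simplification on your side: the genericity of $M$ is not needed. Over $K(X)$ the matrix $1-XM$ is \emph{always} invertible, since $\det(1-XM)\in K[X]$ has constant term $\det(I_n)=1$. So the identity $\com(1-XM)=X^{n-1}\com(\tfrac1X I_n-M)$ holds for every $M$ directly, and the density argument can be dropped.
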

\begin{proof}
It all comes down to the following result:
let $A \in M_d(K[X])$ a matrix of polynomials of degree at most $1,$
then $\det (A^{\rec,1})=\det(A)^{\rec,d}.$
Indeed, one can use multilinearity of the determinant on $X^d \det(A(1/X))$
to prove this result.
It directly implies the second part of the lemma; the first part follows
from the fact that the entries of $\com(X-M)$ and of $\com(1-XM)$
are determinants of size $n-1$.
\end{proof}

This lemma allows us to compute $\com(1-XM)$ instead of $\com(X-M).$
This has a remarkable advantage: the pivots during the computation of
the SNF of $\com(1-XM)$ are units of $\OK\llb X\rrb,$ and are known
in advance to be on the diagonal. This leads to a very smooth
precision and complexity behaviour when the input matrix lives in 
$M_n(\OK).$ 

\noindent\hrulefill

\noindent {\bf Algorithm 2:} {\tt Approximate $\com (X -H)$ }

\noindent{\bf Input:} an approximate Hessenberg matrix $H$ in $M_n(\OK).$

\smallskip

\noindent 0.\ $U:=1-XH.$ $U_0:=1-XH.$

\noindent 1.\ While updating $U$, \textbf{track} $P$ and $Q$ so that $U_0=PUQ$ is always satisfied.

\noindent 2.\ {\bf for} $i=1,\dots,n-1$ {\bf do} 

\noindent 3.\  \:  \textbf{Eliminate}, modulo $X^{n+1}$ the coefficients $U_{i,j},$ for $j\geq i+1$ 
using the invertible pivot
$U_{i,i}=1+XL_{i,i} \mod X^{n+1}$ (with $L_{i,i} \in \OK[X]$). 

\noindent 4.\    {\bf for} $i=1,\dots,n-1$ {\bf do} 

\noindent 5. \  \:  \textbf{Eliminate}, modulo $X^{n+1}$ the coefficients $U_{i+1,i},$
using the invertible pivot $U_{i,i}.$

\noindent 6. \ $\psi:=\prod_i U_{i,i}.$

\noindent 7. \ Rescale to get $U = I_n \mod X^{n+1}.$

\noindent 8. \ $V:=\psi \times P \times Q   \mod X^{n+1}.$ \footnote{The product $P \times Q$
should be implemented by sequential row operations corresponding to the eliminations in Step 5
in order to avoid a product of two matrices in $M_n(\OK[X])$.}

\noindent 9. \ \textbf{Return} $V^{\rec,n-1}, \psi^{\rec,n}.$

\vspace{-1ex}\noindent\hrulefill

\medskip

\begin{theo}
Let $H \in M_n(\OK)$ be an approximate Hessenberg matrix.
Then, using Algorithm 2, one can compute $\com (X -H)$
and $\chi_H$ in 
$\softO (n^3)$ operations in $\OK$ at the precision given by $H.$
\end{theo}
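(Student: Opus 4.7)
My plan is to verify three claims about Algorithm 2: correctness (it returns $\com(X-H)$ and $\chi_H$), complexity ($\softO(n^3)$ operations in $\OK$), and the precision behavior (matching the input precision of $H$). For correctness, I would first invoke the reciprocal-polynomial lemma to rewrite the task as the computation of $\com(1-XH)$ and $\det(1-XH)$, both of which have degree at most $n$ in $X$ and can therefore be computed modulo $X^{n+1}$ without loss. In the local ring $R = \OK\llb X \rrb / X^{n+1}$, the matrix $U_0 = 1 - XH$ is upper Hessenberg (because $H$ is), and each of its diagonal entries is of the form $1 + O(X)$, hence a unit of $R$. Step 3 performs column operations using these diagonal pivots to clear the strictly upper triangular part; an induction on the outer loop index $i$ shows that after iteration $i$ the current matrix has column $j$ supported in rows $i{+}1, \ldots, \min(j{+}1, n)$, so after Step 3 completes the matrix is bidiagonal (diagonal plus subdiagonal). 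Step 5 uses row operations to clear the subdiagonal, yielding a diagonal matrix whose product of entries is $\psi = \det(U_0) = \chi_H^{\rec,n}$, and Step 7 rescales to the identity. Maintaining the invariant $U_0 = PUQ$ throughout the reduction, after Step 7 the tracked matrices $P$ and $Q$ satisfy $\psi \cdot P \cdot Q \equiv \com(U_0) \pmod{X^{n+1}}$ (equivalently, $U_0^{-1} \equiv P Q \pmod{X^{n+1}}$ up to sign conventions induced by the rescaling); Step 9 then recovers $\com(X-H)$ and $\chi_H$ by applying the reciprocal.

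For precision, the key observation is that every pivot used is of the form $1 + O(X)$, hence a unit of $\OK\llb X \rrb$. Therefore the Smith-reduction analysis of the previous subsection applies with vanishing ``condition number'': no precision is lost beyond that already present in $H$, and Step 9 is a purely combinatorial rearrangement of coefficients that does not affect precision either.

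The main obstacle I expect is the complexity analysis, because the object $\com(1-XH) \bmod X^{n+1}$ itself has $\Theta(n^3)$ coefficients, forcing care to avoid $\softO(n^4)$. Thanks to the sparsity pattern above, each of the $O(n^2)$ column operations in Step 3 touches only $O(1)$ polynomial entries of $U$, each a polynomial of degree $\leq n$, so each update costs $\softO(n)$; this gives $\softO(n^3)$ for the action on $U$. Step 5 involves only $n-1$ row operations on rows of $n$ polynomials of degree $\leq n$, again $\softO(n^3)$. The delicate point is the computation of $V = \psi \cdot P \cdot Q$ in Step 8: following the footnote, I would avoid forming $P$ and $Q$ as explicit dense matrices, instead storing them in factored form as products of elementary matrices whose non-identity entries are confined to a single row or column, and compute $V$ by applying the $O(n)$ elementary factors of $Q$ sequentially to a running $n \times n$ matrix, each update costing $\softO(n^2)$. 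A symmetric argument handles the elementary factors of $P$. Combined with the linear cost of the reciprocal-polynomial step, this yields the claimed $\softO(n^3)$ bound.
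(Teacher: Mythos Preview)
Your correctness and precision arguments are essentially correct and match the paper. The complexity analyses for Steps 3 and 5 also agree with the paper's reasoning (your $\softO(n^3)$ for Step 5 is an overcount --- the matrix is already bidiagonal there, so the paper gets $\softO(n^2)$ --- but this does not affect the overall bound).

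The gap is in your treatment of Step 8. You propose storing both $P$ and $Q$ as products of $O(n)$ elementary factors ``with non-identity entries confined to a single row or column'' and applying them sequentially, asserting that each application costs $\softO(n^2)$. This is fine for $P$: Step 5 contributes only $n{-}1$ simple shears (a single off-diagonal entry each), and applying such a shear to an $n\times n$ matrix of degree-$n$ polynomials is genuinely $\softO(n^2)$. But it fails for $Q$: if you group the column operations at outer-loop index $i$ into one factor $E_i$, that factor carries $n{-}i$ non-identity entries in row $i$, and applying $E_i$ to a generic dense polynomial matrix costs $\softO\big((n{-}i)\,n^2\big)$, not $\softO(n^2)$. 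Summed over $i$, your scheme gives $\softO(n^4)$.

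The paper avoids this via an observation you are missing: because of the \emph{order} in which the eliminations in Step 3 are carried out, the matrix $Q$ is simply filled in as an upper-triangular matrix whose entries are exactly the multipliers already computed during Step 3 --- no additional polynomial arithmetic is required to form $Q$ explicitly. (Concretely, when the factor for index $i$ updates the running $Q$, the rows $j>i$ being combined into row $i$ are still untouched standard basis vectors, so the ``update'' is a copy, not a multiply-add.) With $Q$ available explicitly, $P\times Q$ is then computed by applying the $n{-}1$ simple shear factors of $P$ to $Q$, each at cost $\softO(n^2)$, for the claimed $\softO(n^3)$ total; this is exactly what the footnote on line 8 is pointing to.
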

\begin{proof}
First, the operations of the lines 2 and 3 use $\softO (n^3)$ operations
in $\OK$ at the precision given by $H.$
Indeed, since $H$ is an approximate Heisenberg matrix, when we use $U_{i,i}$ as pivot
the only other nonzero coefficient in its column is $U_{i+1,i}$.
As a consequence, when performing this column-pivoting, only two rows ($i$ and
$i+1$) lead to operations in $\OK \llb X \rrb$ other than checking precision.
Hence, line 3 costs $\softO (n^2)$ for the computation of $U.$
Following line 1, the computation of $Q$ is done by operations on rows, starting from the identity matrix.
The order in which the entries of $U$ are cleared implies that $Q$ is just filled in as an upper triangular matrix:
no additional operations in $\OK\llb X\rrb$ are required. Thus the total cost
for lines 2 and 3 is indeed $\softO (n^3)$ operations.

For lines 4 and 5, there are only $n-1$ eliminations, resulting in a $\softO (n^2)$ cost
for the computation of $U.$. Rather than actually construct $P$, we just track the eliminations
performed in order to do the corresponding row operations on $Q$, since we only need the product $P \times Q$.

Line 6 is in $\softO (n^2)$ and 7 in $\softO (n^3).$

Thanks to the fact that the $P$ only corresponds to the product of $n-1$ 
shear matrices, the
product on line 8 is in $\softO (n^3).$
We emphasize that no division has been done throughout the algorithm.
Line 9 is costless, and the result is then proved.
\end{proof}
\begin{rem}
If $M \in M_n(K)$ does not have coefficients in $\OK,$
we may apply Algorithms 1 and 2 to $p^v M \in M_n(\OK)$
in $\softO (n^3)$ operations in $\OK$, and then divide
the coefficient of $X^k$ in the resulting polynomial by $p^{kv}$.
\end{rem}

We will see in Section \ref{sec:optjagged} that for an
entry matrix with coefficients known at flat precision,
Algorithms 1 and 2 are enough to
know the optimal jagged precision on $\chi_M.$

\subsection{The comatrix of $X{-}M$}

In this section, we combine Proposition \ref{prop:shortcom}
with Algorithm 2 to compute the comatrix of $X-M$
when $\chi_M$ is squarefree.   Note that this condition on
$\chi_M$ is equivalent to $M$ being diagonalizable under
the assumption that $d\chi_M$ is surjective.  The result is
the following $\softO(n^3)$ algorithm, where the only
divisions are for gcd and modular inverse computations.

\noindent\hrulefill

\noindent {\bf Algorithm 3:} {\tt Approximate $\com (X{-}M)$ }

\noindent{\bf Input:} an approx. $M \in M_n(\OK),$ with $\disc(\chi_M) \neq 0.$ 

\smallskip

\noindent 0.\ Find $P \in GL_n(\OK)$ and $H \in M_n(\OK),$ approximate Hessenberg,
such that $M=PHP^{-1},$ using Algorithm 1. 

\noindent 1.\ Compute $A=\com (X-H)$ and $\chi_M = \chi_H$ using Algorithm 2.

\noindent 2.\ Do $\row(A,1) \leftarrow \row(A,1)+\sum_{i=2}^n \mu_i \row(A,i),$ for
random $\mu_i \in \OK,$ by doing $T \times A$ for some $T \in GL_n(\OK).$
Compute $B:=TAT^{-1}.$

\noindent 3.\ Similarily compute $C:=S^{-1}BS$ for $S \in GL_n(\OK)$ corresponding to
adding a random linear combination of the columns of index $j \ge 2$
to the first column of $B.$ 

\noindent 4.\  \textbf{If} $\gcd(C_{1,1}, \chi_M) \neq 1,$ \textbf{then} go to 2.

\noindent 5. Let $F$ be the inverse of $C_{1,1} \mod \chi_M$.

\noindent 6. Let $U := \col(C,1)$ and $V := F \cdot \row(C,1) \mod \chi_M$.

\noindent 7. Return $\com(X-M):=(PT^{-1}S U \times V S^{-1} T P^{-1}) \mod \chi_M.$

\vspace{-1ex}\noindent\hrulefill

\medskip

\begin{theo}
\label{thm:alg3}
For $M \in M_n(\OK)$ such that $\disc( \chi_M) \neq 0,$
Algorithm 3 computes
$\com (X-M) \pmod{\chi_M}$ in average complexity $\softO (n^3)$
operations in $K$.
The only divisions occur in taking gcds and inverses modulo $\chi_M$.
\end{theo}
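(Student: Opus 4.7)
The plan is to verify, in order, correctness, the individual step costs, and finally the expected loop count at step 4. The hypothesis $\disc(\chi_M) \neq 0$ ensures that $M$ has pairwise distinct eigenvalues over $\bar K$, so by Proposition \ref{prop:surjectivity} $d\chi_M$ is surjective and Proposition \ref{prop:shortcom} applies (both to $M$ and to any matrix similar to it).

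For correctness, track what the algorithm computes symbolically. From step 0, $M = PHP^{-1}$, and Lemma \ref{lem:comatrix_of_similar} gives $\com(X-M) = P\,A\,P^{-1}$ with $A = \com(X-H)$. Steps 2 and 3 perform two similarity transformations, producing $C = S^{-1}T\,A\,T^{-1}S$, which by Lemma \ref{lem:comatrix_of_similar} again equals $\com(X - H')$ where $H' = (S^{-1}T)H(S^{-1}T)^{-1}$. Apply Proposition \ref{prop:shortcom} to $H'$: its comatrix has a rank-$1$ factorization modulo $\chi_M$, and since $\chi_M$ is squarefree, the Chinese Remainder Theorem makes this factorization unique up to scaling by a unit modulo $\chi_M$. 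Thus, once $C_{1,1}$ is invertible modulo $\chi_M$, the vectors $U = \col(C,1)$ and $V = C_{1,1}^{-1}\row(C,1)$ reproduce that factorization and $C \equiv U \cdot V \pmod{\chi_M}$. Conjugating back by $PT^{-1}S$, as in line 7 and the remark following Proposition \ref{prop:shortcom}, yields $\com(X-M) \bmod \chi_M$.

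For the complexity, step 0 costs $\softO(n^3)$ by Algorithm 1 and step 1 costs $\softO(n^3)$ by Algorithm 2. Steps 2 and 3 each apply a single shear similarity to a matrix with polynomial entries of degree $<n$, taking $O(n^2)$ polynomial operations and thus $\softO(n^3)$ in $\OK$. Step 4 is a gcd of two polynomials of degree $\le n$ over $K$, computable in $\softO(n)$; step 5 is a modular inverse in $K[X]/\chi_M$, also $\softO(n)$; step 6 is one modular multiplication, $\softO(n)$. Step 7 consists of multiplying the scalar matrices $PT^{-1}S$ and $S^{-1}TP^{-1}$ by the polynomial vectors $U$ and $V$ respectively ($\softO(n^3)$), then forming an outer product, which is $n^2$ polynomial multiplications of degree $<n$, again $\softO(n^3)$. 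The only divisions are in $\OK^\times$ (inside Algorithms 1 and 2), the gcd of step 4, and the modular inverse of step 5.

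The main obstacle is bounding the expected number of iterations through step 4. The plan is to factor $\chi_M = \prod_i f_i$ into pairwise coprime irreducible factors in $K[X]$ (legal since $\disc(\chi_M) \neq 0$); then $\gcd(C_{1,1},\chi_M) = 1$ iff $C_{1,1} \not\equiv 0 \pmod{f_i}$ for every $i$. Modulo each $f_i$, the matrix $A = \com(X-H)$ has rank exactly $1$ over the field $K[X]/f_i$ (its kernel and image capture the unique eigenline of $H$ at the root of $f_i$), and the same is true of $C$. Consequently $C_{1,1} \bmod f_i$ is a nonzero bilinear form in the row parameters $(\mu_i)$ chosen in step 2 and the column parameters chosen in step 3. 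Sampling these parameters uniformly from a set whose reduction in each $K[X]/f_i$ has size $\ge 2n$ (enlarging the residue field by an unramified extension of logarithmic degree if necessary), a Schwartz--Zippel estimate bounds the per-factor failure probability by $O(1/n)$; a union bound over the at most $n$ factors then gives expected $O(1)$ iterations, and the total expected cost remains $\softO(n^3)$.
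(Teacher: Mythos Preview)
Your argument follows the paper's proof closely: same cost accounting for steps 0--7, and the same core idea for the probabilistic step (reduce modulo each root/irreducible factor of $\chi_M$, use that $\com(X{-}H)$ has rank exactly one there, and conclude that the bad parameter locus is thin). Two small inaccuracies are worth flagging. First, $C_{1,1}$ is not bilinear in the row and column parameters: because $B = TAT^{-1}$ with $T = I + e_1 v^t$, the entries $B_{1,j}$ for $j\ge 2$ are already quadratic in the $\mu_i$'s, so $C_{1,1}$ has total degree $\le 3$; Schwartz--Zippel still applies, but with that degree. Second, Step~6 multiplies a row of length $n$ by $F$ modulo $\chi_M$, which is $\softO(n^2)$, not $\softO(n)$ (this does not affect the overall $\softO(n^3)$). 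The paper's treatment of the loop at Step~4 is phrased slightly differently: it works modulo $X-\lambda$ for each root $\lambda \in \bar K$ and observes that the bad set for the $\mu_i$'s (resp.\ the column parameters) is an affine hyperplane for each $\lambda$, hence a finite union of hyperplanes overall, so ``almost any'' choice in $\OK$ succeeds --- a measure-zero argument rather than a Schwartz--Zippel bound. Your suggestion to enlarge the residue field is unnecessary in that framing and, strictly speaking, modifies the algorithm rather than analyzes it.
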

\begin{proof}
As we have already seen, completing Steps 0 and 1 is in $\softO (n^3).$
Multiplying by $T$ or $S$ or their inverse corresponds
to $n$ operations on rows or columns over a matrix with coefficients
in $\OK[X]$ of degree at most $n.$
Thus, it is in $\softO (n^3).$
Step 5 is in $\softO (n),$ Step 6 in $\softO(n^2)$ and Step 7 in $\softO(n^3)$.
All that is to prove is that the set of $P$ and $S$ to avoid
is of dimension at most $n-1.$
The idea is to work modulo $X-\lambda$
for $\lambda$ a root of $\chi (M)$ (in an algebraic closure)
and then apply Chinese Remainder Theorem.
The goal of the Step $2$ is to ensure the first row of $B$ contains an
invertible entry modulo $\chi_M.$
Since $A(\lambda)$ is of rank one, the $\mu_i$'s have to avoid an
affine hyperplane so that $\row(B,1) \mod (X-\lambda)$ is a non-zero vector.
Hence for $\row(B,1) \mod \chi (M)$ to contain an invertible coefficient,
a finite union of affine hyperplane is to avoid.
Similarly, the goal of Step 3 is to put an invertible coefficient (modulo
$\chi_M$) on $C_{1,1},$ and again, only a finite union of affine
hyperplane is to avoid.
Hence, the set that the $\mu_i$'s have to avoid is a finite union
of hyperplane, and hence, is of dimension at most $n-1.$
Thus, almost any choice of $\mu_i$ leads to a matrix $C$ passing the test
in Step 4.
This concludes the proof.
\end{proof}

\begin{rem}
As in the previous section, it is possible
to scale $M \in M_n(K)$ so as to
get coefficients in $\OK$ and apply the previous algorithm.
\end{rem}
\begin{rem}
We refer to \cite{caruso:15a} for the handling
of the precision of gcd and modular inverse computations.
In this article, ways to tame the loss of precision
coming from divisions are explored, following
the methods of \cite{caruso-roe-vaccon:14a}.
\end{rem}

\section{Differential\\via Frobenius form}
\label{sec:diffFrob}

The algorithm designed in the previous section computes the differential 
$d \chi_M$ of $\chi$ at a given matrix $M \in M_n(K)$ for a cost of 
$O(n^3)$ operations in $K$. This seems to be optimal given that the 
(naive) size of the $d \chi_M$ is $n^3$: it is a matrix of size $n
\times n^2$. It turns out however that improvements are still possible!
Indeed, thanks to Proposition~\ref{prop:shortcom}, the matrix of 
$d \chi_M$ admits a compact form which can be encoded using only $O(n^2)$ 
coefficients. The aim of this short section is to design a fast 
algorithm (with complexity $\softO(n^\omega)$) for computing this short 
form. The price to pay is that divisions in $K$ appear, which can be an 
issue regarding to precision in particular cases.
In this section, we only estimate the number of operations in $K$
and not their behaviour on precision.

From now on, we fix a matrix $M \in M_n(K)$ for which $d \chi_M$ is 
surjective. Let $(\alpha, P, Q, \chi_M)$ be the quadruple encoding
the short form of $d \chi_M$; we recall that they are related by the
relations:
\begin{align*}
d \chi_M(dM) & =\tr(\com(X{-}M) \cdot dM) \\
\com(X{-}M) & = \alpha \cdot P V^t \cdot V Q^t \mod \chi_M.
\end{align*}
An approximation to $\chi_M$ can be computed in $\softO(n^\omega)$ operations in $K$
(\textit{e.g.} as a by-product of \cite{storjohann:01a}).

The matrix $P$ can be computed as follows. Pick $c \in K^n$. Define 
$c_i = M^i c$ for all $i \geq 1$. The $c_i$'s can be computed in 
$\softO(n^\omega)$ operations in $K,$ \textit{e.g.}
using the first algorithm
of \cite{keller-gehrig:85a}. Let $P_\inv$ be the 
$n \times n$ matrix whose rows are the $c_i$'s for $1 \leq i \leq n$. 
Remark that $P_\inv$ is invertible if and only if $(c_0, c_1, \ldots, 
c_{n-1})$ is a basis of $K^n$ if and only if $c$ is a cyclic vector. 
Moreover after base change to the basis $(c_0, \ldots, c_{n-1})$, the matrix 
$M$ takes the shape \eqref{eq:companion}. In other words, if $P_\inv$
is invertible, then $P = P_\inv^{-1}$ is a solution of $M = P \mathscr{C} P^{-1}$,
where $\mathscr{C}$ is the companion matrix similar to $M$.
Moreover, observe that the condition ``$P_\inv$ is invertible'' is open
for the Zariski topology. It then happens with high probability as soon
as it is not empty, that is as soon as $M$ admits a cyclic vector, which
holds by assumption.

The characteristic polynomial $\chi_M$ can be recovered thanks to the
relation $a_0c_0 + a_1c_1 + \dots + a_{n-1}c_{n-1} = -c_{n-1} \cdot P$.

Now, instead of directly computing $Q$, we first compute a matrix $R$ 
with the property that $\mathscr{C}^t = R \mathscr{C} R^{-1}$. To do so,
we apply the same strategy as above except that we start with the vector
$e = (1, 0, \ldots, 0)$ (and not with a random vector). A simple computation shows
that, for $1 \leq i \leq n{-}1$, the vector $\mathscr{C}^i e$ has the shape:
$$\mathscr{C}^i e = (0, \ldots, 0, -a_0, \star, \ldots, \star)$$
with $n{-}i$ starting zeros. Therefore the $\mathscr{C}^i e$'s form a basis of
$K^n$, \emph{i.e.} $e$ is always a cyclic vector of $\mathscr{C}$. Once $R$ has
been computed, we recover $Q$ using the relation $Q = P_\inv^t R$.

It remains to compute the scaling factor $\alpha$. For this, we write
the relation:
\begin{equation}
\label{eq:shortcomN}
\com(X{-}\mathscr{C}) = \alpha \cdot V^t \cdot V R^t \mod \chi_M
\end{equation}
which comes from Eq.~\eqref{eq:shortcom} after multiplication on the 
left by $P^{-1}$ and multiplication on the right by $P$. We observe
moreover that the first row of $R$ is $(1, 0, \ldots, 0)$. Evaluating
the top left entry of Eq.~\eqref{eq:shortcomN}, we end up with the 
relation:
$$\alpha = a_1 + a_2 X + \cdots + a_{n-1} X^{n-2} + X^{n-1}.$$
No further computation are then needed to derive the value of $\alpha$.
We summarize this section with the following theorem:

\begin{theo}
\label{thm:compute_shortcom}
Given $M \in M_n(K)$ such that $d \chi_M$ is surjective,
then one can compute $(\alpha, P, Q, \chi_M)$
in $K[X]$ such that
$\com(X{-}N) = \alpha \cdot V^t \cdot V R^t \mod \chi_M$
in $\softO(n^\omega)$ operations in $K.$
\end{theo}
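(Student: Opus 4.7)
My plan is to follow the pipeline already sketched in the discussion preceding the theorem and verify that each stage fits within the $\softO(n^\omega)$ budget. The ingredients are: (i) a Krylov basis that conjugates $M$ into its companion form, (ii) the companion polynomial $\chi_M$ read off from the first linear dependence in that basis, (iii) a second Krylov basis that conjugates the companion into its transpose, and (iv) the assembly of $Q$ and the explicit formula for $\alpha$. Correctness of the output quadruple will follow immediately from Proposition~\ref{prop:shortcom}.

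First I would pick $c \in K^n$ at random and use the baby-steps/giant-steps algorithm of Keller--Gehrig \cite{keller-gehrig:85a} to compute the Krylov iteration $c, Mc, M^2 c, \ldots, M^n c$ in $\softO(n^\omega)$ operations in $K$. Stacking the first $n$ of these as rows yields the matrix $P_\inv$; by the Zariski-openness argument recalled in the text, $P_\inv$ is invertible with high probability under the standing hypothesis that $d \chi_M$ is surjective (equivalently, that $M$ has a cyclic vector, by Proposition~\ref{prop:surjectivity}). A single matrix inversion then produces $P = P_\inv^{-1}$ in $O(n^\omega)$ operations, and by construction $M = P \mathscr{C} P^{-1}$. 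The coefficients of $\chi_M$ are obtained at no extra asymptotic cost from the linear dependence of $M^n c$ on the $c_i$'s, via the relation $(a_0, \ldots, a_{n-1}) = - (M^n c) \cdot P$, which is just one matrix-vector product.

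Second, with $\chi_M$ and hence $\mathscr{C}$ in hand, I would apply the same Krylov strategy to $\mathscr{C}^t$ starting from the deterministic vector $e = (1, 0, \ldots, 0)$. The shape of $\mathscr{C}^i e$ recalled in the text guarantees that $e$ is always cyclic for $\mathscr{C}^t$, so no randomization is required at this step, and another Keller--Gehrig call yields in $\softO(n^\omega)$ operations the matrix $R$ satisfying $\mathscr{C}^t = R \mathscr{C} R^{-1}$. A final matrix product sets $Q = P_\inv^t R$, providing the intertwiner $M^t = Q \mathscr{C} Q^{-1}$ required by Proposition~\ref{prop:shortcom}. The scalar $\alpha$ is then extracted essentially for free: comparing the $(1,1)$ entry of \eqref{eq:shortcomN} (whose right-hand side is supported only in the top row, since the first row of $R$ is $(1, 0, \ldots, 0)$) to a direct evaluation of the top-left entry of $\com(X{-}\mathscr{C})$ pins down $\alpha = a_1 + a_2 X + \cdots + a_{n-1} X^{n-2} + X^{n-1}$.

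The total cost is dominated by the two Keller--Gehrig calls, the single inversion of $P_\inv$, and the single product $P_\inv^t R$, all of which are $\softO(n^\omega)$. The only genuine obstacle is the probabilistic step: one must either argue that a random $c$ succeeds with constant probability (so the expected number of retries is $O(1)$), or add a cheap invertibility test on $P_\inv$ with restart on failure. Since cyclicity of $e$ removes any randomness in the second Krylov phase, the overall procedure is a Las Vegas algorithm whose expected complexity matches the stated bound.
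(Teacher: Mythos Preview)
Your proposal is correct and follows essentially the same approach as the paper: the two Keller--Gehrig Krylov computations (one randomized from $c$ to produce $P_\inv$ and $\chi_M$, one deterministic from $e$ to produce $R$), the assembly $Q = P_\inv^t R$, and the closed-form extraction of $\alpha$ are exactly the steps the paper uses, with the same $\softO(n^\omega)$ cost accounting. Your explicit remark on the Las Vegas nature of the first step is a welcome clarification of something the paper leaves implicit.
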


\section{Optimal jagged precision}
\label{sec:optjagged}

In the previous Sections, \ref{sec:diffHess} and \ref{sec:diffFrob},
we have proposed algorithms to obtain the comatrix of 
$X-M.$ Our motivation for these computations is to then
be able to understand what is the optimal precision on $\chi_M.$
In this section, we provide some answers to this question,
along with numerical evidence.
We also show that it is then possible to derive 
optimal precision of eigenvalues of $M.$ 

\subsection{On the characteristic polynomial}

For $0 \leq k < n$, let $\pi_k : K[X] \to K$ be the mapping taking a 
polynomial to its coefficients in $X^k$. By applying 
\cite{caruso-roe-vaccon:14a}*{Lem. 3.4} to the composite $\pi_k 
\circ \chi_M$, one can figure out the optimal precision on the
$k$-th coefficient of the characteristic polynomial of $M$ (at
least if $M$ is given at enough precision).

Let us consider more precisely the case where $M$ is given at 
jagged precision: the $(i,j)$ entry of $M$ is given at precision 
$O(\pi^{N_{i,j}})$ for some integers $N_{i,j}$. 
Lemma 3.4 of \cite{caruso-roe-vaccon:14a} then shows that
the optimal precision on the $k$-th coefficient of $\chi_M$ is 
$O(\pi^{N'_k})$ where $N'_k$ is given by the formula:
\begin{equation}
\label{eq:optjagged}
N'_k = \min_{1 \leq i, j\leq n} N_{j,i} + \val(\pi_k(C_{i,j})),
\end{equation}
where $C_{i,j}$ is the $(i,j)$ entry of the comatrix $\com(X{-}M)$.

\begin{prop} \label{prop:optimal_jagged}
If $M \in M_n(\OK)$ is given at (high enough) jagged precision, 
then we can compute the optimal jagged precision on $\chi_M$ in 
$\softO (n^3)$ operations in $K$.
\end{prop}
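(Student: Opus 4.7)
The plan is to reduce everything to formula~\eqref{eq:optjagged}. Once we have $\com(X-M)$ stored as an $n \times n$ matrix of polynomials of degree less than $n$, we can read off the $n^3$ scalar valuations $\val(\pi_k(C_{i,j}))$ and, for each $k \in \{0,\dots,n-1\}$, take the minimum of $N_{j,i} + \val(\pi_k(C_{i,j}))$ over the $n^2$ pairs $(i,j)$. This bookkeeping step costs only $O(n^3)$ comparisons in $\Z$, and the optimality of each resulting $N'_k$ is an instance of \cite{caruso-roe-vaccon:14a}*{Lem.\ 3.4} applied to the composite $\pi_k \circ \chi_M$; this is precisely the derivation presented just before the statement of the proposition.

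The core task is therefore to produce $\com(X-M)$ in $\softO(n^3)$ operations in $K$. Assuming the jagged precision on $M$ is high enough that $\disc(\chi_M) \neq 0$ can be certified, I would invoke Algorithm~3, which returns $\com(X-M) \bmod \chi_M$ with average complexity $\softO(n^3)$. Since the entries of $\com(X-M)$ have degree at most $n-1$, reduction modulo $\chi_M$ discards no information, so the $n^3$ scalar coefficients output by Algorithm~3 are exactly what the bookkeeping step above consumes.

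The main obstacle is keeping the cost of producing $\com(X-M)$ within $\softO(n^3)$ rather than a naive $O(n^{\omega+1})$. If one instead first computes a Hessenberg form $M = PHP^{-1}$ via Algorithm~1 and then $\com(X-H)$ via Algorithm~2, the final conjugation $P\,\com(X-H)\,P^{-1}$ of a polynomial matrix of degree $n{-}1$ by a general scalar matrix $P$ is too expensive. Algorithm~3 sidesteps this because its cyclic-vector randomization reduces the required change-of-basis matrices $T$ and $S$ to single compound row or column operations, and multiplying the polynomial matrix $\com(X-H)$ by such an elementary combination costs only $\softO(n^3)$. This structural shortcut, together with the standard $\softO(n^\omega)$ computation of $\chi_M$, is what allows the full pipeline --- computation of $\com(X-M)$ followed by the $O(n^3)$ minimization in~\eqref{eq:optjagged} --- to run within the claimed bound.
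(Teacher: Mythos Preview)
Your proof follows the same two-step structure as the paper's: compute $\com(X-M)$ in $\softO(n^3)$, then apply Eq.~\eqref{eq:optjagged} at a further cost of $O(n^3)$ integer operations. The paper's own proof is much terser and simply cites Sections~\ref{sec:diffHess} and~\ref{sec:diffFrob} for the first step, noting that either the Hessenberg or the Frobenius route works.

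The one substantive difference is your commitment to Algorithm~3. That algorithm requires $\disc(\chi_M)\neq 0$ and only achieves $\softO(n^3)$ \emph{on average} (Theorem~\ref{thm:alg3}), whereas the proposition as stated carries no squarefreeness hypothesis and asserts a deterministic bound. The paper's alternative, the Frobenius route of Section~\ref{sec:diffFrob}, computes the compact form $(\alpha,P,Q,\chi_M)$ in $\softO(n^\omega)$ under the weaker hypothesis that $d\chi_M$ is surjective; expanding $\alpha\cdot PV^t\cdot VQ^t \bmod \chi_M$ into the full $n\times n$ polynomial matrix then costs $\softO(n^3)$ (an $n\times 1$ by $1\times n$ outer product of degree-$(n{-}1)$ polynomials, followed by $n^2$ scalar--polynomial products and reductions). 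So your extra assumption and the average-case caveat are avoidable, and you should mention this second route. Your analysis of why the naive conjugation $P\,\com(X-H)\,P^{-1}$ overshoots $\softO(n^3)$ is correct and is a useful elaboration the paper leaves implicit.
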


\begin{proof}
We have seen in \S \ref{sec:diffHess} and \S \ref{sec:diffFrob}
that the computation of the matrix $C = \com(X{-}M)$ can be carried out 
within $\softO(n^3)$ operations in $K$ (either with the Hessenberg 
method or the Frobenius method). We conclude by applying 
Eq.~\eqref{eq:optjagged} which requires no further operation in $K$
(but $n^3$ evaluations of valuations and $n^3$ manipulations of 
integers).
\end{proof}

\begin{rem}
If $M \in M_n(\OK)$ is given at (high enough) \emph{flat} precision, 
then we can avoid the final base change step in the Hessenberg method.
Indeed, observe that, thanks to Lemma \ref{lem:comatrix_of_similar}, 
we can write:
$$\tr(\com (X{-}M) \cdot dM)=\tr(\com (X{-}H)\cdot P^{-1} dM P)$$
where $P$ lies in $\GL_n(\OK)$. Moreover, the latter condition implies
that $P^{-1} dM P$ runs over $M_n(\OK)$ when $P$ runs over $M_n(\OK)$.
As a consequence, the integer $N'_k$ giving the optimal precision on the 
$k$-th coefficient of $M$ is also equal to
$N + \min_{1 \leq i, j\leq n} \val(\pi_k(C^H_{i,j}))$
where $C^H_{i,j}$ is the $(i,j)$ entry of $\com(X{-}H)$,
where $H$ is the Hessenberg form of $M$.
\end{rem}

\begin{rem} \label{rem:lift_for_optimal}
As a consequence of the previous discussion, once the optimal jagged 
precision is known, it is possible to lift the entries of $M$ to a sufficiently
large precision, rescale them to have entries in $O_K$
and then use Algorithm 2 to
compute the characteristic polynomial.
The output might then need to be rescaled and
 truncated at the optimal precision. 
This requires $\softO(n^3)$ operations in $O_K$ 
and unfortunately, for several instances, may require to increase a lot the precision.
\end{rem}

\medskip

\noindent
{\bf Numerical experiments.}
We have made numerical experiments in \textsc{SageMath}~\cite{sage}
in order to compare the optimal precision obtained with the methods
explained above with the actual precision obtained by the software.
For doing so, we picked a sample of $1000$ random matrices $M$ in 
$M_9(\Q_2)$ where all the entries are given at the same \emph{relative
precision}.
We recall that, in \textsc{SageMath}, random elements $x \in \Q_p$ are 
generated as follows. We fix an integer $N$ --- the so-called relative
precision --- and generate elements of $\Q_p$ of the shape
$$x = p^v \cdot \big(a + O\big(p^{N+v_p(a)}\big)\big)$$
where $v$ is a random integer generated according to the distribution:
$$\mathbb P [v = 0] = \frac 1 5 \quad ; \quad
\mathbb P [v = n] = \frac 2 {5\cdot |n| \cdot (|n|+1)} \text{ for }
|n| \geq 1$$
and $a$ is an integer in the range $[0, p^N)$, selected uniformly at random.

Once this sample has been generated, we computed, for each $k \in \{0, 
1, \ldots, 8\}$, the three following quantities:

\vspace{-2mm}

\begin{enumerate}[$\bullet$]
\renewcommand{\itemsep}{0pt}
\item the optimal precision on the $k$-th coefficient of the 
characteristic polynomial of $M$ given by Eq.~\eqref{eq:optjagged}
\item in the capped relative model\footnote{Each 
coefficient carries its own precision which is updated after each 
elementary arithmetical operation.},
the precision gotten on the $k$-th coefficient of the 
characteristic polynomial of $M$ computed \emph{via} the call:

\hfill$M\texttt{.charpoly(algorithm="df")}$\hfill\null

\item in the model of floating-point arithmetic (see 
\cite{caruso:17a}*{\S 2.3}), the number of correct digits of the 
$k$-th coefficient of the characteristic polynomial of $M$.
\end{enumerate}

\begin{rem}
The keyword \texttt{algorithm="df"} forces \texttt{SageMath} to use the 
division free algorithm of \cite{seifullin:02a}. It is likely that, proceeding so, we 
limit the loss of precision.
\end{rem}

\noindent
The table of Figure~\ref{fig:exp} summarizes the results obtained. 
\begin{figure}
\hfill
{\renewcommand*{\arraystretch}{1.3}
\begin{tabular}{|c|r|r|r|}
\cline{2-4} 
\multicolumn{1}{l|}{} 
  & \multicolumn{3}{c|}{Average loss of accuracy} \\
\cline{2-4}
\multicolumn{1}{l|}{\null\hspace{6mm}\null} 
  & \makebox[1.5cm]{\hfill Optimal\hfill\null}
  & \makebox[1.5cm]{\hfill CR\hfill\null}
  & \makebox[1.5cm]{\hfill FP\hfill\null} \\
\hline
\rule{0pt}{2.7ex}%
$X^0$ & $3.17$ & $196\phantom{.00}$ & $189\phantom{.00}$ \vspace{-1.5ex} \\
{\scriptsize (det.)} 
& {\scriptsize dev: $1.76$} & {\scriptsize dev: $240$} & {\scriptsize dev: $226$} \\
\rule{0pt}{2.7ex}%
$X^1$ & $2.98$ & $161\phantom{.00}$ & $156\phantom{.00}$ \vspace{-1.5ex} \\
& {\scriptsize dev: $1.69$} & {\scriptsize dev: $204$} & {\scriptsize dev: $195$} \\
\rule{0pt}{2.7ex}%
$X^2$ & $2.75$ & $129\phantom{.00}$ & $126\phantom{.00}$ \vspace{-1.5ex} \\
& {\scriptsize dev: $1.57$} & {\scriptsize dev: $164$} & {\scriptsize dev: $164$} \\
\rule{0pt}{2.7ex}%
$X^3$ & $2.74$ & $108\phantom{.00}$ & $105\phantom{.00}$ \vspace{-1.5ex} \\
& {\scriptsize dev: $1.73$} & {\scriptsize dev: $144$} & {\scriptsize dev: $143$} \\
\rule{0pt}{2.7ex}%
$X^4$ & $2.57$ &  $63.2\phantom{0}$ &  $60.6\phantom{0}$ \vspace{-1.5ex} \\
& {\scriptsize dev: $1.70$} & {\scriptsize dev: $85.9$} & {\scriptsize dev: $85.8$} \\
\rule{0pt}{2.7ex}%
$X^5$ & $2.29$ &  $51.6\phantom{0}$ &  $49.7\phantom{0}$ \vspace{-1.5ex} \\
& {\scriptsize dev: $1.66$} & {\scriptsize dev: $75.3$} & {\scriptsize dev: $74.9$} \\
\rule{0pt}{2.7ex}%
$X^6$ & $2.07$ &   $9.04$           &   $8.59$ \vspace{-1.5ex} \\
& {\scriptsize dev: $1.70$} & {\scriptsize dev: $26.9$} & {\scriptsize dev: $26.4$} \\
\rule{0pt}{2.7ex}%
$X^7$ & $1.64$ &   $5.70$           &   $5.38$ \vspace{-1.5ex} \\
& {\scriptsize dev: $1.65$} & {\scriptsize dev: $15.3$} & {\scriptsize dev: $14.7$} \\
\rule{0pt}{2.7ex}%
$X^8$ & $0.99$ &   $0.99$           &   $0.99$ \vspace{-1.5ex} \\
{\scriptsize (trace)} 
& {\scriptsize dev: $1.37$} & {\scriptsize dev: $1.37$} & {\scriptsize dev: $1.37$} \\
\hline
\end{tabular}}
\hfill\null

\medskip

\hfill
{\footnotesize Results for a sample of $1000$ instances}
\hfill\null

\caption{Average loss of accuracy on the coefficients
of the characteristic polynomial of a random $9 \times 9$
matrix over $\Q_2$}
\label{fig:exp}

\end{figure}
It should be read as follows. First, the acronyms CR and FP 
refers to ``capped relative'' and ``floating-point'' respectively.
The numbers displayed in the table are the average loss of
\emph{relative} precision. 
More precisely, if $N$ is the relative precision at
which the entries of the input random matrix $M$ have been generated
and $v$ is the valuation of the $k$-th coefficient of $\chi_M$, then:

\vspace{-2mm}

\begin{enumerate}[$\bullet$]
\renewcommand{\itemsep}{0pt}
\item the column ``Optimal'' is the average of the quantities 
$(N'_k{-}v) - N$ (where $N'_k$ is defined by Eq.~\eqref{eq:optjagged}): 
$N'_k{-}v$ is the optimal \emph{relative} precision, so that the
difference $(N'_k{-}v) - N$ is the loss of relative precision;
\item the column ``CR'' is the average of the quatities 
$(\text{CR}_k{-}v) - N$ where $\text{CR}_k$ is the computed (absolute) 
precision on the $k$-th coefficient of $\chi_M$;
\item the column ``FP'' is the average of the quatities 
$(\text{FP}_k{-}v) - N$ where $\text{FP}_k$ is the first position of
an incorrect digit on the $k$-th coefficient of $\chi_M$.
\end{enumerate}

We observe that the loss of relative accuracy stays under control in the 
``Optimal'' column whereas it has a very erratic behavior --- very large 
values and very large deviation as well --- in the two other columns. 
These experiments thus demonstrate the utility of the methods developed
in this paper.

\subsection{On eigenvalues}
\label{ssec:eigenvalues}

Let $M \in M_n(K)$ and $\lambda \in K$ be a \emph{simple}
\footnote{the corresponding generalized eigenspace has dimension $1$} eigenvalue 
of $M$. We are interesting in quantifying the optimal precision on 
$\lambda$ when $M$ is given with some uncertainty.

To do so, we fix an approximation $M_\app \in M_n(K)$ of $M$ and 
suppose that the uncertainty of $M$ is ``jagged'' in the sense that
each entry of $M$ is given at some precision $O(\pi^{N_{i,j}})$.
Let $\lambda_\app$ be the relevant eigenvalue of $M_\app$. We remark 
that it is possible to follow the eigenvalue $\lambda_\app$ on a small 
neighborhood $\calU$ of $M$. More precisely, there exists a unique continuous 
function $f : \calU \to K$ such that:
\begin{enumerate}[$\bullet$]
\renewcommand{\itemsep}{0pt}
\item $f(M_\app) = \lambda_\app$, and
\item $f(M')$ is an eigenvalue of $M'$ for all $M' \in \calU$.
\end{enumerate}

\begin{lem}
The function $f$ is strictly differentiable on a neighborhood of 
$M_\app$.
The differential of $f$ at $M$ is the linear mapping:
$$dM \mapsto d \lambda = - \frac{\tr(\com(\lambda-M) \cdot dM)}
{\chi'_M(\lambda)}$$
where $\chi'_M$ is the usual derivative of $\chi_M$.
\end{lem}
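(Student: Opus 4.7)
The plan is to apply the $p$-adic implicit function theorem to the polynomial map $F : M_n(K) \times K \to K$ defined by $F(M', \mu) = \chi_{M'}(\mu)$. Since $F$ is a polynomial in the entries of $M'$ and in $\mu$, it is strictly differentiable everywhere. By hypothesis, $F(M, \lambda) = 0$, and because $\lambda$ is a simple eigenvalue (the generalized eigenspace has dimension one), $\lambda$ is a simple root of $\chi_M$, so $\partial_\mu F(M, \lambda) = \chi'_M(\lambda) \neq 0$. The implicit function theorem over $K$ then produces an open neighborhood of $M$ and a unique strictly differentiable function $g$ on it satisfying $F(M', g(M')) = 0$ and $g(M) = \lambda$. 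A uniqueness argument (shrinking $\calU$ and using that the roots of $\chi_{M'}$ depend continuously on $M'$ while remaining separated near $\lambda$) identifies $g$ with the restriction of $f$, so $f$ is itself strictly differentiable near $M_\app$.

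For the formula, I would differentiate the identity $\chi_{M'}\bigl(f(M')\bigr) = 0$ at $M' = M$ in the direction $dM \in M_n(K)$. Applying the chain rule and viewing $d\chi_M(dM)$ as a polynomial in $X$ (as in equation \eqref{eq:dchi}), one obtains
$$\bigl[d\chi_M(dM)\bigr](\lambda) \;+\; \chi'_M(\lambda) \cdot df_M(dM) \;=\; 0.$$
Substituting \eqref{eq:dchi}, namely $d\chi_M(dM) = \tr(\com(X-M) \cdot dM)$, and evaluating at $X = \lambda$ yields
$$\tr\bigl(\com(\lambda - M) \cdot dM\bigr) + \chi'_M(\lambda) \cdot df_M(dM) = 0,$$
and dividing through by $\chi'_M(\lambda)$ (which is nonzero) gives the required expression for $df_M$.

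The routine part is the chain rule computation of the differential; the main subtlety is the application of the implicit function theorem in the non-archimedean setting, together with checking that the implicitly defined function coincides with the globally defined $f$ on a small enough neighborhood of $M_\app$. This reduces to the fact that the simple eigenvalue $\lambda_\app$ is isolated among roots of $\chi_{M_\app}$, so that for $M'$ close enough to $M_\app$, exactly one eigenvalue of $M'$ lies near $\lambda_\app$; continuity of $f$ forces $f(M')$ to be this distinguished eigenvalue, which is precisely $g(M')$.
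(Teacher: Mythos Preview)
Your proof is correct and follows essentially the same approach as the paper: invoke the implicit function theorem for the first assertion, then differentiate the relation $\chi_{M'}(f(M'))=0$ via the chain rule and substitute Eq.~\eqref{eq:dchi} to obtain the formula. The paper's proof is a two-line sketch; you have simply filled in the details (polynomiality of $F$, simplicity of $\lambda$ giving $\chi'_M(\lambda)\neq 0$, and the identification of the implicitly defined function with $f$).
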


\begin{proof}
The first assertion follows from the implicit function Theorem.
Differentiating the relation $\chi_M(\lambda) = 0$, we get
$\chi'_M(\lambda) \cdot d \lambda + \tr(\com(X-M) \cdot dM)(\lambda) = 0$,
from which the Lemma follows.
\end{proof}

\noindent
Lemma 3.4 of \cite{caruso-roe-vaccon:14a} now implies that, if the
$N_{i,j}$'s are large enough and sufficiently well balanced, the optimal
precision on the eigenvalue $\lambda$ is $O(\pi^{N'})$ with:
$$N' = \min_{1 \leq i, j\leq n} \big(N_{j,i} + \val(C_{i,j}(\lambda)) - 
\val(\chi'_M(\lambda))\big)$$
where $C_{i,j}$ denotes as above the $(i,j)$ entry of $\com(X{-}M)$.
Writing
$\com(X{-}M) = \alpha \cdot P V^t \cdot V Q^t \text{ mod } \chi_M$
as in Proposition~\ref{prop:shortcom}, we find:
\begin{align}
N' & = \val(\alpha(\lambda)) - \val(\chi'_M(\lambda)) \nonumber \\
& \hspace{2mm} + \min_{1 \leq i, j\leq n} \big(N_{j,i} + 
\val(P_i V(\lambda)^t) + \val(V(\lambda) Q_j^t)\big) \label{eq:Nprime}
\end{align}
where $P_i$ denotes the $i$-th row of $P$ and, similarly, $Q_j$
denotes the $j$-th row of $Q$. Note moreover that $V(\lambda)$ is
the row vector $(1, \lambda, \ldots, \lambda^{n-1})$.
By the discussion of \S \ref{sec:diffFrob}, the exact value of $N'$ can be 
determined for a cost of $\softO(n^\omega)$ operations in $K$ and
$O(n^2)$ operations on integers. 

When $M$ is given at flat precision, \emph{i.e.} the $N_{i,j}$'s are all 
equal to some $N$, the formula for $N'$ may be rewritten:
\begin{align}
N' & = N + \val(\alpha(\lambda)) - \val(\chi'_M(\lambda)) \nonumber \\
& \hspace{2mm} + \min_{1 \leq i \leq n} \val(P_i V(\lambda)^t)
+ \min_{1 \leq j \leq n} \val(V(\lambda) Q_j^t)\big) \label{eq:Nprimeflat}
\end{align}
and can therefore now be evaluated for a cost of $\softO(n^\omega)$
operations in $K$ and only $O(n)$ operations with integers.

\medskip

To conclude, let us briefly discuss the situation where we want
to figure out the optimal jagged precision on a tuple $(\lambda_1,
\ldots, \lambda_s)$ of simple eigenvalues. Applying \eqref{eq:Nprime},
we find that the optimal precision on $\lambda_k$ is 
\begin{align*}
N'_k & = \val(\alpha(\lambda_k)) - \val(\chi'_M(\lambda_k)) \\
& \hspace{2mm} + \min_{1 \leq i, j\leq n} \big(N_{j,i} + 
\val(P_i V(\lambda_k)^t) + \val(V(\lambda_k) Q_j^t)\big).
\end{align*}

\begin{prop}
\label{prop:opteigenvalues}
The $N'_k$'s can be all computed in $\softO(n^\omega)$ operations
in $K$ and $O(n^2 s)$ operations with integers.

If the $N_{i,j}$'s are all equal, the above complexity can be 
lowered to $\softO(n^\omega)$ operations in $K$ and $O(n s)$ operations 
with integers.
\end{prop}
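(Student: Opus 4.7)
The strategy is to combine Theorem \ref{thm:compute_shortcom} with fast multi-point polynomial evaluation, then read off the minima in the stated formula for $N'_k$. The plan breaks into four steps.

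First, I would compute the quadruple $(\alpha, P, Q, \chi_M)$ encoding $\com(X{-}M)$ using Theorem \ref{thm:compute_shortcom}, in $\softO(n^\omega)$ operations in $K$, and form $\chi'_M$ by formal differentiation at negligible cost.

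Second, I observe that $P_i V(\lambda)^t = \sum_{m=0}^{n-1} P_{i,m+1}\lambda^m$ is precisely the evaluation at $\lambda$ of the polynomial whose coefficient sequence is the $i$-th row of $P$; the analogous statement holds for $V(\lambda) Q_j^t$ with the $j$-th row of $Q$. Using fast multi-point evaluation (a polynomial of degree less than $n$ is evaluated at $s$ points in $\softO(n+s)$ operations in $K$), I would evaluate each of the $n$ row-polynomials of $P$ at all of $\lambda_1, \ldots, \lambda_s$, and similarly for $Q$, together with $\alpha$ and $\chi'_M$. Since $s \leq n$, the total cost is $\softO(n(n+s)) = \softO(n^2) \subseteq \softO(n^\omega)$ operations in $K$, with the valuations of all the evaluated values then read off at no additional cost in $K$.

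Third, for each eigenvalue $\lambda_k$, I would compute $N'_k$ by enumerating the $n^2$ sums $N_{j,i} + \val(P_i V(\lambda_k)^t) + \val(V(\lambda_k) Q_j^t)$, taking their minimum, and adding the constant $\val(\alpha(\lambda_k)) - \val(\chi'_M(\lambda_k))$. Aggregated over all $k$, this uses $O(n^2 s)$ operations with integers, which proves the first claim.

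For the flat-precision case, the key simplification is that the minimum factors:
\[
\min_{i,j}\big(N + \val(P_i V(\lambda_k)^t) + \val(V(\lambda_k) Q_j^t)\big) = N + \min_i \val(P_i V(\lambda_k)^t) + \min_j \val(V(\lambda_k) Q_j^t).
\]
Thus only $O(n)$ integer operations are needed per $k$, for a total of $O(ns)$ operations with integers. The main conceptual step is recognizing the formula for $N'_k$ as a collection of polynomial evaluations that can be batched via multi-point evaluation; once this reduction is in place, the complexity claims reduce to a direct accounting.
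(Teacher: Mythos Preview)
Your proof is correct and follows essentially the same strategy as the paper: compute the compact form $(\alpha,P,Q,\chi_M)$, evaluate the relevant quantities at the $\lambda_k$'s, then count the integer minima. The only minor difference is that the paper's main text computes the values $P_iV(\lambda_k)^t$ as entries of the matrix product of $P$ with the Vandermonde-like matrix $(\lambda_k^{m})_{m,k}$ in $\softO(n^\omega)$ (noting in a footnote that $\softO(n^2)$ is achievable via the structured-matrix/multipoint-evaluation route you take directly); both variants fit within the stated bound.
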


\begin{proof}
The $\alpha(\lambda_k)$'s and the $\chi'_M(\lambda_k)$'s can be computed 
for a cost of $\softO(ns)$ operations in $K$ using fast multipoint 
evaluation methods (see 10.7 of \cite{gathen-gerhard:13a}).
On the other hand, we observe that $P_i V(\lambda_k)^t$ is nothing but
the $(i,k)$ entry of the matrix:
$$P \cdot \left( \begin{matrix}
\lambda_1 & \cdots & \lambda_s \\
\lambda_1^2 & \cdots & \lambda_s^2 \\
\vdots & & \vdots \\
\lambda_1^{n-1} & \cdots & \lambda_s^{n-1}
\end{matrix} \right).$$
The latter product can be computed in $\softO(n^\omega)$ operations in 
$K$\footnote{It turns out that $\softO(n^2)$ is also possible because
the right factor is a structured matrix (a truncated Vandermonde):
computing the above product reduces to evaluating a polynomial at the
points $\lambda_1, \ldots, \lambda_s$.}. Therefore all the $P_i 
V(\lambda_k)^t$'s (for $i$ and $k$ varying) 
can be determined with the same complexity. 
Similarly all the $V(\lambda) Q_j^t$ are computed for the same cost.
The first assertion of Proposition~\ref{prop:opteigenvalues} follows.
The second assertion is now proved similarly to the case of a unique
eigenvalue.
\end{proof}

\bibliographystyle{plain}
\bibliography{charpoly}

\end{document}